\numberwithin{equation}{section}
\numberwithin{figure}{section}
\theoremstyle{plain}
  \theoremstyle{plain}
\theoremstyle{plain}
\newtheorem{theorem}{Theorem}[section]
\newtheorem{proposition}{Proposition}[section]
\newtheorem{lemma}{Lemma}[section]
\newtheorem{remark}{Remark}[section]
\newtheorem{corollary}{Corollary}[section]
\newcommand{\nc}{\newcommand}
\nc {\hh}{\check{h}}
\nc {\DD}{\mathcal{D}}
\nc {\CC}{\mathbb{C}}
\nc {\Pp}{\mathbb{P}}
\nc {\Ss}{\mathcal{S}}
\nc {\Pn}{\mathbb{P}^{n}}
\nc {\PP}{\mathbb{P}^{2}}
\nc {\Pd}{ \check{\mathbb{P}}^{2}}
\nc {\WW}{\mathcal{W}}
\nc {\Sym}{\mathrm{Sym}}
\nc {\OO}{\mathcal{O}}
\nc {\UU}{\mathcal{U}}
\nc {\EE}{\mathcal{E}}
\nc {\MM}{\mathcal{M}}
\nc {\KK}{\mathcal{K}}
\nc {\PW}{\mathcal{P}}
\nc {\NW}{\mathcal{N}_{\WW}}
\nc {\FF}{\mathcal{F}}
\nc {\GG}{\mathcal{G}}
\nc {\ZZ}{\mathcal{Z}}
\nc {\LL}{\mathcal{L}}
\nc {\NN}{\mathcal{N}}
\nc {\RR}{\mathcal{R}}
\nc {\Ww}{\mathbb{W}}
\nc {\QQ}{\mathbb{Q}}
\nc {\II}{\mathcal{I}}
\providecommand{\theoremname}{Theorem}
\providecommand{\theoremname}{Theorem}
\providecommand{\theoremname}{Theorem}
\providecommand{\theoremname}{Theorem}
  \providecommand{\propositionname}{Proposition}
\providecommand{\theoremname}{Theorem}
\begin{document}

\title{The polar family of webs and foliations}

\author{M. Falla Luza}
\address{Departamento de Análise, IM, Universidade Federal Fluminense, Rua Mário Santos Braga s/n - Niterói, 24.020-140 RJ, Brazil.}
\email{maycolfl@impa.br}

\author{R. Rosas Bazan}
\address{Departamento de Matemática, Pontificia Universidad Católica del Perú,
Av. Universitaria 1801, San Miguel, Lima, Perú}
\address{IMCA, Calle los Biólogos 245, La Molina, Lima, Perú}
\email{rudy.rosas@pucp.pe}

\thanks{2000 Mathematics Subject Classification: 37F75}

\thanks{Key words: Holomorphic Web, Polar Curve, Polar Family}

\thanks{The first author was partially supported by Mathamsud/CAPES. He specially thanks the invitation of PUCP at Lima - Peru in July 2012. The second author was partially supported by Mathamsud/CONCYTEC. He thanks the invitation of UFF at Rio de Janeiro - Brazil in March 2013.}

\maketitle
\begin{abstract}
We define the polar curves and the polar family associated to a projective web and obtain some results about the geometry of the generic element of this family. We also deal with the particular case of foliations and prove the constancy of the topological embedded type of the generic polar.
\end{abstract}

\section{Introduction}

Polar subvarieties of foliations have been extensively studied by many authors,
cf. \cite{Mol,Mol-1,Soa}. For example, in \cite{Mol-1,Soa} the authors
bound the degree of an algebraic subvariety which is invariant by
a foliation by using the polar classes of the foliation. Most recently,
in \cite{Falla-Fass} the authors obtained some more general results
for projective webs of any dimension, again by using polar classes.

In \cite{Mol} R. Mol made an study of the properties of the generic
polar curve of a foliation on $\PP$. He proves for instance that
the generic polar curve of a holomorphic foliation is irreducible
and computes its genus.

The aim of this work is to study polar curves of projective webs.
Loosely speaking, a $k$-web is locally given by $k$ foliations
on the complement of a Zariski closed set. We obtain three main results,
the first one is a characterization of the webs having a reducible
generic polar, we prove that this is the case only when the web is
decomposable or its degree is zero, see Theorem \ref{irreducibilidad}.
Our second result says that the polar family characterizes the web,
in other words, two webs having the same polar family are equal, see
Theorem \ref{Familia-polar}.

Finally, in the last section, we return to study polar curves of foliations,
and prove that the topological embbeded type of the generic polar
is constant, see Theorem 4.1. This theorem says that the family of
generic polars is a kind of globally equisingular family. This fact
is well known in the local case (see \cite{LR}) and has been used in
the study of curves and foliation singularities (see for example \cite{Cor}, \cite{M}, \cite{Rou}).

We would like to thank Rogério Mol  and Jorge Vitório Pereira for many valuable conversations and sugestions.

\section{Definitions and some properties}

Roughly speaking, web geometry is the study of invariants for finite
families of foliations. A $k$-web of codimension one on $\Pn$ is
given by an open covering $\mathcal{U}=\{U_{i}\}$ of $\Pn$ and $k$-symmetric
$1$-forms $\omega_{i}\in\Sym^{k}\Omega_{\Pn}^{1}(U_{i})$ subject
to the conditions: 
\begin{enumerate}
\item For each non-empty intersection $U_{i}\cap U_{j}$ there exists a
non-vanishing function $g_{ij}\in\OO_{U_{i}\cap U_{j}}^{*}$ such
that $\omega_{i}=g_{ij}\omega_{j}$. 
\item For every $i$ the zero set of $\omega_{i}$ has codimension at least
two. 
\item For every $i$ and a generic $x\in U_{i}$, the germ of $\omega_{i}$
at $x$ seen as homogeneous polynomial of degree $k$ in the ring
$\mathcal{O}_{x}[dx_{1},...,dx_{n}]$ is square--free. 
\item For every $i$ and a generic $x\in U_{i}$, the germ of $\omega_{i}$
at $x$ is a product of $k$ various $1$-forms $\beta_{1},\ldots,\beta_{k}$,
where each $\beta_{i}$ is integrable. 
\end{enumerate}
The $k$-symmetric $1$-forms $\{\omega_{i}\}$ patch together to
form a global section $\omega=\{\omega_{i}\}\in H^{0}(\PP,Sym^{k}\Omega_{\Pn}^{1}\otimes\mathcal{L})$
where $\mathcal{L}$ is the line bundle over $\Pn$ determined by
the cocycle $\{g_{ij}\}$. The \textbf{singular set} of $\WW$, denoted
by ${\rm {Sing}}(\WW)$, is the zero set of the twisted $k$-symmetric
$1$-form $\omega$. The \textbf{degree} of $\WW$, denoted by $\deg(\WW)$,
is geometrically defined as the degree of the tangency locus between
$\WW$ and a generic $\mathbb{P}^{1}$ linearly embedded in $\Pn$.
If $i:\mathbb{P}^{1}\hookrightarrow\Pn$ is the inclusion then the
degree of $\WW$ is the degree of the zero divisor of the twisted
$k$-symmetric $1$-form $i^{*}\omega\in H^{0}(\mathbb{P}^{1},Sym^{k}\Omega_{\mathbb{P}^{1}}^{1}\otimes\mathcal{L}|_{\mathbb{P}^{1}})$.
Since $\Omega_{\mathbb{P}^{1}}^{1}=\mathcal{O}_{\mathbb{P}^{1}}(-2)$
it follows that $\mathcal{L}=\mathcal{O}_{\mathbb{P}^{n}}(\deg(\WW)+2k)$.
We refer to \cite[Section 1.3]{PiPer} for more details and examples.

We say that $x\in\PP$ is a $\textbf{smooth point}$ of $\WW$, for
short $x\in\WW_{sm}$, if $x\notin{\rm {Sing}}(\WW)$ and the germ
of $\omega$ at $x$ satisfies the conditions described in $(3)$
and $(4)$ above. For any smooth point $x$ of $\WW$ we have $k$
distinct (not necessarily in general position) linearly embedded subspaces
of dimension $n-1$ passing through $x$. Each one of these subspaces
will be called hyperplane tangent to $\WW$ at $x$ and denoted by
$T_{x}^{1}\WW,...,T_{x}^{k}\WW$.

Consider the manifold $M=\mathbb{P}(T^{*}\Pn)$ which can be identified
with the incidence variety of points and hyperplanes in $\Pn$, thus
we have two natural projections $\pi$ and $\check{\pi}$ to $\Pn$
and $\check{\mathbb{P}}^{n}$ respectively. To any $k$-web $\WW$
we can associate the subvariety $S_{\WW}$ of $M$ defined as 
\[
S_{\WW}=\overline{\{(x,H)\in M:x\in\WW_{sm}\hspace{0.1cm}and\hspace{0.1cm}\exists\hspace{0.1cm}1\leq i\leq k,\hspace{0.1cm}H=T_{x}^{i}\WW\hspace{0.1cm}\}}.
\]

The $\textbf{discriminant}$ of $\WW$, denoted by $\Delta(\WW)$, is the projection of the ramification
variety of $\pi|_{S_{\WW}}$. Given a point $p\in\Pn$, we set $\LL_{p}$
the family of hyperplanes passing through $p$. This set can be thought
as an hyperplane in $\check{\mathbb{P}}^{n}$ and then one can take
$\mathcal{H}_{p}=\check{\pi}^{-1}(\LL_{p})\subseteq M$. We define
\textbf{the polar set of $\WW$ with center on $p$}, denoted by $P_{p}^{\WW}$,
as 
\[
P_{p}^{\WW}=tang(\WW,\LL_{p}):=\pi(S_{\WW}\cap\mathcal{H}_{p}).
\]
We also denote by $\widetilde{P_{p}^{\WW}}=S_{\WW}\cap\mathcal{H}_{p}$
which is a subvariety in $S_{\WW}$ projecting onto $P_{p}^{\WW}$.\\
 In this work we shall focus on the case of $\PP$. Observe that in
this case $\LL_{p}$ is the radial foliation with singularity at $p$
and $P_{p}^{\WW}$ is the curve of tangencies between this foliation
and the web.\\
 In what follows we list some properties of the polar curve which
are already known for foliations, cf. \cite{Mol}.

\begin{proposition} Let $\WW$ be a $k$ web of degree $d$ on $\PP$,
then 
\begin{enumerate}
\item The polar curve with center in $p$ is the whole $\PP$ if and only
if $\WW$ can be written as the product $\WW=\LL_{p}\boxtimes\WW'$,
for some $(k-1)$ web $\WW'$.\\

\item If $P_{p}^{\WW}\subsetneq\PP$, then $degP_{p}^{\WW}=d+k$. 
\end{enumerate}
\end{proposition}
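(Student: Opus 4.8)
The plan is to analyze the polar curve $P_p^{\WW}$ via the local structure of the web and then compute its degree by intersecting with a generic line, reducing everything to the already-understood case of a single foliation.

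The plan is to realize the polar curve $P_{p}^{\WW}$ as the zero divisor of a single section obtained by contracting the symmetric form $\omega$ defining $\WW$ with the radial vector field of $\LL_{p}$, and then to read off its degree directly from the twisting line bundle. First I would fix homogeneous coordinates with $p=[0:0:1]$ and describe $\LL_{p}$ by its radial field $R$, which in the affine chart is $x\partial_{x}+y\partial_{y}$ and globally is a nonzero section $R\in H^{0}(\PP,T\PP\otimes\OO(-1))$, corresponding to the degree-$0$ foliation $\LL_{p}$, whose only singularity is $p$. The $1$-form $\eta=x\,dy-y\,dx$ defining $\LL_{p}$ is characterised by $\eta(R)\equiv0$. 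At a smooth point $\omega$ factors as $\omega=\beta_{1}\odot\cdots\odot\beta_{k}$, and evaluating the symmetric $k$-form on $R$ gives $\omega(R)=\beta_{1}(R)\cdots\beta_{k}(R)$; this vanishes exactly when some tangent direction $T_{x}^{i}\WW$ is radial, i.e. on $\mathrm{tang}(\WW,\LL_{p})$. Thus $P_{p}^{\WW}$ is the zero divisor of the contraction $\omega(R)$, the codimension-two loci $\mathrm{Sing}(\WW)$ and $\{p\}$ not affecting the divisor, and on $\WW_{sm}$ this agrees with $\pi(S_{\WW}\cap\mathcal{H}_{p})$.

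For part (2), I would compute the twist of this contraction. Since $\omega\in H^{0}(\Sym^{k}\Omega^{1}_{\PP}\otimes\OO(d+2k))$ and $R^{\odot k}\in H^{0}(\Sym^{k}T\PP\otimes\OO(-k))$, the natural pairing $\Sym^{k}\Omega^{1}_{\PP}\otimes\Sym^{k}T\PP\to\OO$ yields $\omega(R)\in H^{0}(\OO(d+2k)\otimes\OO(-k))=H^{0}(\OO(d+k))$. A nonzero section of $\OO(d+k)$ has zero divisor of degree exactly $d+k$, so as soon as $P_{p}^{\WW}\subsetneq\PP$ (equivalently $\omega(R)\not\equiv0$) we obtain $\deg P_{p}^{\WW}=d+k$, counted with multiplicity.

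For part (1), the implication $\WW=\LL_{p}\boxtimes\WW'\Rightarrow P_{p}^{\WW}=\PP$ is immediate from $\eta(R)\equiv0$, since then $\omega=\eta\odot\omega'$ and $\omega(R)=\eta(R)\,\omega'(R)\equiv0$. For the converse I would argue fibrewise: writing $\omega=\sum_{i}a_{i}\,dx^{k-i}dy^{i}$, the value $\omega(R)$ is the substitution $\sum_{i}a_{i}(x,y)\,x^{k-i}y^{i}$, so $\omega(R)\equiv0$ says that, as a homogeneous form of degree $k$ in $(dx,dy)$, $\omega$ vanishes at the point $[dx:dy]=[x:y]$ for every $(x,y)$; equivalently $\eta=x\,dy-y\,dx$ divides $\omega$ in the symmetric algebra at each point. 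Exact polynomial division by the linear factor $\eta$ then produces $\omega=\eta\odot\omega'$ with $\omega'$ a symmetric $(k-1)$-form, which is the factorisation $\WW=\LL_{p}\boxtimes\WW'$.

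The main obstacle I anticipate is precisely this last globalisation: I must check that the quotient $\omega'$ is the symmetric form of a genuine $(k-1)$-web, i.e. that its coefficients remain holomorphic after dividing out $\eta$ (guaranteed because the division is exact at every point), that its zero set still has codimension two, and that it remains generically square-free and a product of integrable factors, so that the conditions defining a web are inherited. I would also take care to record that the divisor interpretation is the correct one, so that the ``degree'' in part (2) is the one matching the line-bundle computation, and that no spurious fixed component can lower the count below $d+k$.
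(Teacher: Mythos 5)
Your proof is correct, but it takes a genuinely different route from the paper's. The paper argues on the incidence variety $M=\mathbb{P}(T^{*}\PP)$: for part (1) it uses that $\pi|_{\mathcal{H}_{p}}:\mathcal{H}_{p}\rightarrow\PP$ is birational, so $P_{p}^{\WW}=\PP$ forces $S_{\WW}\cap\mathcal{H}_{p}=\mathcal{H}_{p}$, i.e.\ $\mathcal{H}_{p}$ is an irreducible component of $S_{\WW}$, which is precisely the factorization $\WW=\LL_{p}\boxtimes\WW'$; for part (2) it simply cites \cite{Falla-Fass}. You instead realize $P_{p}^{\WW}$ as the zero divisor of the contraction $\omega(R)$ with the radial field $R\in H^{0}(\PP,T\PP\otimes\OO_{\PP}(-1))$, so that part (2) becomes the line-bundle count $\OO_{\PP}(d+2k)\otimes\OO_{\PP}(-k)=\OO_{\PP}(d+k)$ and part (1) becomes an exact-division statement $\omega=\eta\odot\omega'$ in the symmetric algebra. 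Your approach buys a self-contained proof of the degree formula (which the paper outsources) and makes explicit the divisor structure of degree $d+k$ that is implicitly needed for the polar family to live in $C(d+k)$; the paper's approach buys a one-line proof of part (1), since a component of $S_{\WW}$ equal to $\mathcal{H}_{p}$ is by definition the subweb $\LL_{p}$. The one step you should tighten is the holomorphy of the quotient $\omega'$: pointwise divisibility of a family of binary forms by a linear form does not by itself produce a holomorphic quotient. Either observe that $\omega(R)\equiv 0$ means the contraction, viewed as a polynomial in $(x,y,dx,dy)$, vanishes identically on the irreducible hypersurface $\{x\,dy-y\,dx=0\}$, so $x\,dy-y\,dx$ divides it in the UFD $\mathbb{C}[x,y,dx,dy]$; or solve recursively for the coefficients of $\omega'$ on $\{x\neq0\}$ and on $\{y\neq0\}$ (where they are visibly holomorphic and agree by uniqueness of the quotient) and extend across the remaining point $p$ by Hartogs. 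Either way the gap is routine to fill, and the inherited web conditions (codimension-two zero set, generic square-freeness, integrability of the factors) pass to $\omega'$ exactly as you indicate.
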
 \begin{proof} \hfill{} 
\begin{enumerate}
\item Let us assume that $P_{p}^{\WW}=\PP$. Since $\pi|_{\mathcal{H}_{p}}:\mathcal{H}_{p}\rightarrow\PP$
is birrational one has $\mathcal{H}_{p}\cap S_{\WW}=\mathcal{H}_{p}$
and then $\mathcal{H}_{p}$ is an irreducible component of $S_{\WW}$.
The converse is clear. 
\item See \cite[Proposition 2.1]{Falla-Fass} where is proved the same in
the case of $\Pn$. 
\end{enumerate}
\end{proof}

\begin{proposition} Let $\WW_{1}=[\omega_{1}]$ and $\WW_{2}=[\omega_{2}]$
be $k$-webs of degree $d$ and $p \in \PP$, then $P_{p}^{\WW_{1}}=P_{p}^{\WW_{2}}$
if and only if the web given by $\omega_{2}-\omega_{1}$ can be written as the product $\LL_{p}\boxtimes\WW'$ for some $(k-1)$ web $\WW'$. 
\end{proposition}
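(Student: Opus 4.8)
The plan is to exploit the fact that, in affine coordinates in which $p=(p_1,p_2)$ and the radial foliation $\LL_p$ is defined by the $1$-form $\eta_p=(y-p_2)\,dx-(x-p_1)\,dy$, the polar curve $P_p^{\WW}$ of a web $\WW=[\omega]$ is cut out by the polynomial obtained from $\omega$ by substituting the radial direction into the symmetric $1$-form. Writing $\omega=\sum_{i=0}^{k}a_i(x,y)\,dx^{k-i}dy^{i}$, this substitution is the evaluation $\Phi_p(\omega):=\omega\big(x,y;\,x-p_1,\,y-p_2\big)$, and $P_p^{\WW}=\{\Phi_p(\omega)=0\}$. The two features I would use are that $\Phi_p$ is $\mathbb{C}$-linear in $\omega$ and, being the evaluation of a binary form at a fixed direction, multiplicative: $\Phi_p(\alpha\cdot\beta)=\Phi_p(\alpha)\,\Phi_p(\beta)$ for symmetric forms $\alpha,\beta$. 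In particular $\Phi_p(\eta_p)=(y-p_2)(x-p_1)-(x-p_1)(y-p_2)=0$.

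The key lemma I would isolate is a purely algebraic description of the kernel of $\Phi_p$: for a $k$-symmetric form $\omega$ one has $\Phi_p(\omega)\equiv 0$ if and only if $\eta_p$ divides $\omega$, i.e. $\omega=\eta_p\cdot\omega'$ for a $(k-1)$-symmetric form $\omega'$. This is the content, at the level of symmetric forms, of part $(1)$ of the preceding proposition; alternatively it follows directly by regarding $\omega$ as a binary form of degree $k$ in $(dx,dy)$ over the field $\mathbb{C}(x,y)$, noting that $\eta_p$ is the linear form whose root is exactly the radial direction at which $\Phi_p$ evaluates, so that vanishing of $\Phi_p(\omega)$ is equivalent to $\eta_p$ dividing $\omega$ over $\mathbb{C}(x,y)$; since $\eta_p$ is primitive in $\mathbb{C}[x,y][dx,dy]$, Gauss's lemma yields the cofactor $\omega'$ with polynomial coefficients. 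The algebraic route is the one I would actually use, since the difference $\omega_2-\omega_1$ need not itself satisfy the web axioms and so the preceding proposition does not apply to it directly.

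Granting this, the equivalence is immediate from linearity. For the backward implication, if the form $\omega_2-\omega_1$ equals $\eta_p\cdot\omega'$, then multiplicativity and $\Phi_p(\eta_p)=0$ give $\Phi_p(\omega_2)-\Phi_p(\omega_1)=\Phi_p(\omega_2-\omega_1)=\Phi_p(\eta_p)\Phi_p(\omega')=0$, so the two polar curves coincide. For the forward implication, $P_p^{\WW_1}=P_p^{\WW_2}$ means the two defining polynomials $\Phi_p(\omega_1),\Phi_p(\omega_2)$, both of degree $d+k$ by part $(2)$ of the preceding proposition, cut out the same divisor and hence agree up to a nonzero constant; after rescaling the representative $\omega_2$ (which does not change the web $\WW_2$) we may assume $\Phi_p(\omega_1)=\Phi_p(\omega_2)$, so that $\Phi_p(\omega_2-\omega_1)=0$. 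The lemma then gives $\omega_2-\omega_1=\eta_p\cdot\omega'$, i.e. the web defined by $\omega_2-\omega_1$ factors as $\LL_p\boxtimes\WW'$ with $\WW'=[\omega']$.

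The step I expect to be the main obstacle is the kernel lemma, specifically the descent from divisibility over the function field $\mathbb{C}(x,y)$ to divisibility over $\mathbb{C}[x,y]$ (equivalently, checking that the cofactor $\omega'$ genuinely has regular coefficients and assembles into an honest symmetric $1$-form of the expected degree), together with the bookkeeping that $\LL_p$ has degree $0$ so that $\WW'$ comes out as a $(k-1)$-web of degree $d$. A secondary point to address carefully is the normalization in the forward direction: equality of the polar curves is equality of divisors, so one only obtains the defining polynomials up to scale, and one must rescale a representative before forming the difference $\omega_2-\omega_1$.
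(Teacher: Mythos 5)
Your proof is correct, and mathematically it runs parallel to the paper's, but in a different framework: you compute downstairs in affine coordinates with the symmetric forms themselves, while the paper works upstairs on the incidence variety $M=\mathbb{P}(T^{*}\PP)$. There, writing $S_{\WW_{i}}=\{F_{i}=0\}$, the paper passes from $P_{p}^{\WW_{1}}=P_{p}^{\WW_{2}}$ to $S_{\WW_{1}}\cap\mathcal{H}_{p}=S_{\WW_{2}}\cap\mathcal{H}_{p}$, normalizes so that $(F_{1}-F_{2})|_{\mathcal{H}_{p}}\equiv 0$, and concludes that $\mathcal{H}_{p}$ is a component of the surface $\{F_{1}-F_{2}=0\}$, which is precisely the factorization $\LL_{p}\boxtimes\WW'$. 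Your evaluation map $\Phi_{p}$ is the coordinate incarnation of restriction to $\mathcal{H}_{p}$, and your kernel lemma ($\Phi_{p}(\omega)\equiv 0$ iff $\eta_{p}$ divides $\omega$) is the coordinate translation of the paper's component statement; you establish it by the factor theorem for binary forms over $\mathbb{C}(x,y)$ plus Gauss's lemma, where the paper implicitly relies on the irreducibility of $\mathcal{H}_{p}$ and factoriality. Your route makes explicit two things the paper leaves tacit: the rescaling of the representative $\omega_{2}$ (the paper's ``one can assume''), and the existence of a cofactor $\omega'$ with genuine polynomial coefficients --- which matters because $\omega_{2}-\omega_{1}$ need not satisfy the web axioms, so the earlier proposition on products cannot simply be quoted for it, a point you correctly flag. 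The paper's route buys brevity and coordinate-freeness, since all the algebra is absorbed into the geometry of $S_{\WW}$ and $\mathcal{H}_{p}$ already set up in Section 2. The one caveat you share with the paper: deducing proportionality of the two defining polynomials from equality of the polar curves treats the polars as divisors (or assumes they are reduced); as mere point sets, curves of the same degree and the same support need not have proportional equations.
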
 \begin{proof} Let us
suppose that $P_{p}^{\WW_{1}}=P_{p}^{\WW_{2}}$ and denote by $S_{\WW_{i}}=\{F_{i}=0\}$.
By hypothesis we have $S_{\WW_{1}}\cap\mathcal{H}_{p}=S_{\WW_{2}}\cap\mathcal{H}_{p}$
so one can assume that $(F_{1}-F_{2})|_{\mathcal{H}_{p}}\equiv0$.
Therefore $\mathcal{H}_{p}$ is a component of the surface defined
by $F_{1}-F_{2}$. The converse part is clear. \end{proof}

Now we denote by $C(r)=\mathbb{P}H^{0}(\PP,\OO_{\PP}(r))$ the projective
space of curves of degree $r$ and, for a $k$-web $\WW$, by $\mathcal{P}=\{p_{1},\ldots,p_{m}\}$
the set of centers of radial foliations tangent to $\WW$ (which could
be empty). The \textbf{polar family} of $\WW$ is defined as the subvariety
\[
\RR(\WW):=\overline{\{P_{p}^{\WW}:p\in\PP-\mathcal{P}\}}\subseteq C(d+k).
\]
In other words, $\RR(\WW)$ is the closure of the image of the rational
map 
\[
\mathcal{G}:\PP-\mathcal{P}\rightarrow C(d+k),\hspace{0.3cm}\mathcal{G}(p)=P_{p}^{\WW}.
\]

\begin{remark} In the case $\WW=\LL_{p}$ we have $\RR(\WW)=$ \{lines
through $p$\}, which is a line in $C(1)=\Pd$. \end{remark}

\begin{proposition} If $\WW$ is a $k$-web of degree $d$ different
from $\LL_{p}$ then dim$\RR(\WW)$ $=2$ and deg$\RR(\WW)$ $=k^{2}$.
\end{proposition}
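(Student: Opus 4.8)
The plan is to realize $\mathcal{G}$ as a rational map defined by a linear system of degree-$k$ curves in the plane of centers, and to read off the dimension and degree of its image from that system. The key input is a reciprocity between a point of the polar and the center: for a smooth point $z$ of $\WW$ and a generic center $p$, the point $z$ lies on $P_p^\WW$ exactly when the radial direction $\overline{pz}$ is one of the web directions at $z$, i.e. when $p\in T_z^i\WW$ for some $i$. Writing $\mathcal{T}_z=\bigcup_{i=1}^{k}T_z^i\WW$ for the union of the $k$ tangent lines of $\WW$ at $z$, this reads
\[
z\in P_p^\WW\iff p\in\mathcal{T}_z .
\]
Concretely, $P_p^\WW$ is cut out by the equation obtained by contracting $\omega$ with the radial vector field centered at $p$; its coefficients, viewed as functions of $p$, are homogeneous of degree $k$. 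Hence $\mathcal{G}$ is given by a linear subsystem of $|\OO_\PP(k)|$, and the pullback under $\mathcal{G}$ of the evaluation hyperplane $H_z=\{C\in C(d+k):z\in C\}$ is exactly $\mathcal{T}_z$, a curve of degree $k$ formed by $k$ lines through $z$.

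For the dimension, note first that $p\in P_p^\WW$ for every $p$, so the members of the family move with their base point and $\mathcal{G}$ is nonconstant. It then suffices to show $\mathcal{G}$ is generically finite, for then $\dim\RR(\WW)=2$. If a general fibre were a curve, there would be a one-parameter family of centers $p'$ with $P_{p'}^\WW=P_p^\WW$; choosing two general points $z_1,z_2$ on this common polar would force that family of centers into $\mathcal{T}_{z_1}\cap\mathcal{T}_{z_2}$. But for generic $z_1,z_2$ the two degree-$k$ curves $\mathcal{T}_{z_1}$ and $\mathcal{T}_{z_2}$ share no line---a common line would have to be tangent to $\WW$ at both $z_1$ and $z_2$---so their intersection is finite, a contradiction. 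Hence $\mathcal{G}$ is generically finite and $\dim\RR(\WW)=2$.

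For the degree I would compute, for generic $z_1,z_2$, the number $\#\bigl(\RR(\WW)\cap H_{z_1}\cap H_{z_2}\bigr)$ of members of the family through two general points; since the curves move, $\RR(\WW)$ lies in no $H_z$ and this number equals $\deg\RR(\WW)$. By the reciprocity it is the number of centers $p$ lying on both $\mathcal{T}_{z_1}$ and $\mathcal{T}_{z_2}$, and as these are degree-$k$ curves with no common component, B\'ezout gives $\mathcal{T}_{z_1}\cdot\mathcal{T}_{z_2}=k^2$; equivalently, the self-intersection of the defining system is $\OO_\PP(k)^2=k^2$. To turn this count into $\deg\RR(\WW)=k^2$ one needs $\mathcal{G}$ to be birational onto its image, so that the relevant centers yield distinct curves, each counted once.

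The main obstacle is exactly this birationality (generic injectivity) of $\mathcal{G}$. I would establish it by recovering the center intrinsically from its polar: along $z=p$ the defining equation vanishes to order $k$, with tangent cone the union of the web directions at $p$, so the center appears as a point of multiplicity $k$ on $P_p^\WW$; for generic $p$ one expects this to be the unique such point, whence $P_p^\WW$ determines $p$. Proving that there is no second point of multiplicity $k$---the delicate range being $k\le d$, where the line through two such points need not violate B\'ezout---together with checking that the $k^2$ intersection points above are genuine centers and are not absorbed by the indeterminacy set $\mathcal{P}$, is the step I expect to require the most care.
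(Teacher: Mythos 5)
Your strategy is in essence the paper's own: the same reciprocity $z\in P_p^{\WW}\iff p\in\mathcal{T}_z$, the same two-point count $\#(\mathcal{T}_{z_1}\cap\mathcal{T}_{z_2})=k^2$ for the degree, and the same contradiction scheme (a general fibre of $\mathcal{G}$ being a curve) for the dimension. The genuine gap is in the dimension half, at the decisive step. You claim that for ``generic'' $z_1,z_2$ on the common polar $C=P_p^{\WW}$ the curves $\mathcal{T}_{z_1}$ and $\mathcal{T}_{z_2}$ share no line. But $z_1,z_2$ are not general points of $\PP$; they are confined to the fixed curve $C$, and the claim fails exactly in the dangerous case. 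Indeed, since the fibre $F$ satisfies $F\subseteq\mathcal{T}_z$ for \emph{every} $\WW$-smooth $z\in C$, any one-dimensional component of $F$ has closure equal to a single line $L$ which passes through every such $z$ and is tangent to $\WW$ there; hence the component of $C$ through $p$ is itself the line $L$, and $L$ is $\WW$-invariant. For any two points $z_1,z_2$ on an invariant line, $\overline{z_1z_2}=L$ \emph{is} tangent to $\WW$ at both, so $L\subseteq\mathcal{T}_{z_1}\cap\mathcal{T}_{z_2}$ and no contradiction arises. This is not a removable technicality: it is precisely the configuration that occurs when $\WW=\LL_q$, where the conclusion is false ($\RR(\LL_q)$ is a line), and tellingly your dimension argument never uses the hypothesis $\WW\neq\LL_p$. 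The paper's proof is devoted exactly to this case: the generic fibres, being pairwise disjoint invariant lines, define a foliation by lines; the only foliations by lines on $\PP$ are the pencils $\LL_q$ (degree-zero foliations); and one then checks that every generic polar reduces to the line $\overline{pq}$, forcing $\WW=\LL_q$ and contradicting the hypothesis. Some argument of this kind is needed to close your gap.

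On the degree, your computation coincides with the paper's, and the two issues you flag as delicate --- generic injectivity of $\mathcal{G}$, and the possibility that some of the $k^2$ points of $\mathcal{T}_{z_1}\cap\mathcal{T}_{z_2}$ are absorbed by the indeterminacy set $\mathcal{P}$ --- are in fact passed over in silence by the paper, which simply equates $\deg\RR(\WW)$ with $\#\bigl(T_{p_1}\WW\cap T_{p_2}\WW\bigr)$. Your second worry is substantive: if $q\in\mathcal{P}$, i.e. $\WW=\LL_q\boxtimes\WW'$, then the leaf of $\LL_q$ through any smooth $z$ is one of the tangent lines at $z$, so $q$ lies on $\mathcal{T}_z$ for every $z$; such a point is always among the $k^2$ intersection points yet never corresponds to a member of the family, so the count must be corrected (or one must reduce to $\mathcal{P}=\emptyset$). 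As for injectivity, for $k\geq 2$ it follows from Theorem \ref{singularidades-polar}: the center is the unique singular point of the generic polar outside $\Delta(\WW)$, which is your ``multiplicity-$k$ point'' idea made precise (though note that theorem appears \emph{after} this proposition in the paper); for $k=1$ one argues instead that the polar map is linear. So on this half you are no less rigorous than the paper --- but the proposal as written proves neither statement completely, and the dimension part needs the missing invariant-line analysis.
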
 
\begin{proof} 
If $\RR(\WW)$ is an irreducible
curve, then for a generic $p\in\PP$ the inverse image $\mathcal{G}^{-1}(P_{p}^{\WW})$
is a curve. Therefore the point $p$ belongs to $P_{q}^{\WW}$ for
infinitely many points $q\in\mathcal{G}^{-1}(P_{p}^{\WW})$. The only
possibility is $\mathcal{G}^{-1}(P_{p}^{\WW})$ to be a line invariant
by $\WW$. Since the only foliations by lines on $\PP$ are the foliations of degree zero (lines through a point), $\mathcal{G}$ defines a radial foliation $\LL_{p}$ tangent to $\WW$ for some point $p\in\PP$. Fix now a generic point
$q\in\PP$; then it is clear that the leaf of $\LL_{p}$ passing
through $q$ is contained in $P_{q}^{\WW}$. If the equality does
not hold we can choose a regular point $z\in P_{q}^{\WW}$ which is
out of the leaf, but since all the points of this leaf has the same
polar curve than $q$ this is not possible. We have just proved that
the polar centered in a generic point $q$ is the line passing by
$p$ and $q$. This shows that $\WW=\LL_{p}$.\\
 To find the degree we need just to do the intersection of $\RR(\WW)$
with a generic plane of codimension $2$ in $C(d+k)$, which corresponds to the curves passing through two points. Then we observe that for
generic $p_{1},p_{2}\in\PP$, $\{p_{1},p_{2}\}\subseteq P_{x}^{\WW}$
if and only if $x\in T_{p_{1}}\WW\cap T_{p_{2}}\WW$. We conclude by noting that the cardinal of $T_{p_1}\WW \cap T_{p_2}\WW$ is $k^2$.
\end{proof}

\begin{remark} If we set $B(R(\WW))$ the set of points which belong
to every polar curve, then is easy to see that $B(R(\WW))\subseteq Sing(\WW)$.
\end{remark}

We conclude the section with an important property of the generic
polar curve. The main ingredient for this result is the extended Bertini's
first theorem, cf. \cite[Theorem 4.1]{Kleiman}.\\
 \begin{theorem}\label{singularidades-polar} For a generic point
$p\in\PP$ we have 
\begin{enumerate}
\item $Sing(P_{p}^{\WW})\subseteq\Delta(\WW)\cup\{p\}$. 
\item The are $k$ smooth and transversal branches of $P_{p}^{\WW}$ passing
through $p$, tangent to the $k$ directions of $T_{p}\WW$. 
\end{enumerate}
\end{theorem}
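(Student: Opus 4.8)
The plan is to lift the whole problem to the surface $S_{\WW}\subseteq M$ and to exploit that, upstairs, the polar curves form a base--point--free linear system to which a Bertini--type statement applies. First I would record that, as $p$ ranges over $\PP$, the sets $\LL_{p}$ describe the complete system of lines in $\Pd$, so the divisors $\mathcal{H}_{p}=\check{\pi}^{-1}(\LL_{p})$ cut on $S_{\WW}$ the linear system $\check{\pi}^{*}|\OO_{\Pd}(1)|$. This system is base--point free on $S_{\WW}$: a point $(x,H)$ lies in $\mathcal{H}_{p}$ iff $p\in H$, and no line $H$ contains every $p\in\PP$. The extended Bertini theorem \cite[Theorem 4.1]{Kleiman}, applied to the morphism $\check{\pi}|_{S_{\WW}}\colon S_{\WW}\to\Pd$ and a generic line $\LL_{p}$, then yields that for generic $p$ the curve $\widetilde{P_{p}^{\WW}}=S_{\WW}\cap\mathcal{H}_{p}$ is smooth at every smooth point of $S_{\WW}$. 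Since the singular points of $S_{\WW}$ are non--smooth points of $\pi|_{S_{\WW}}$, they lie in its ramification variety and hence project into $\Delta(\WW)$; thus the singular points of $\widetilde{P_{p}^{\WW}}$ project into $\Delta(\WW)$.

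Next I would descend along $\pi$. The key elementary remark is that, away from $p$, the map $\pi|_{\widetilde{P_{p}^{\WW}}}$ is injective: if $(x,H_{1})$ and $(x,H_{2})$ were two distinct points of $\widetilde{P_{p}^{\WW}}$ over the same $x\neq p$, then $H_{1},H_{2}$ would be two distinct lines each containing both $x$ and $p$, which is impossible. Moreover, over $\PP\setminus\Delta(\WW)$ the projection $\pi|_{S_{\WW}}$ is an unramified covering, so it is a local biholomorphism and $S_{\WW}$ is smooth there. Consequently, at any $x\in P_{p}^{\WW}\setminus(\Delta(\WW)\cup\{p\})$ the curve $P_{p}^{\WW}$ is, locally, the image under a local biholomorphism of a single smooth branch of $\widetilde{P_{p}^{\WW}}$, hence smooth. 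This gives part $(1)$.

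For part $(2)$ I would analyze the fibre over $p$ itself. For generic $p\notin\Delta(\WW)$ the fibre $\pi^{-1}(p)\cap S_{\WW}$ consists of the $k$ distinct smooth points $(p,T_{p}^{i}\WW)$, and each of them lies on $\widetilde{P_{p}^{\WW}}$ because trivially $p\in T_{p}^{i}\WW$. By the Bertini step $\widetilde{P_{p}^{\WW}}$ is smooth at each such point and $\pi$ is a local biholomorphism there, so $P_{p}^{\WW}$ carries a smooth branch $B_{i}$ through $p$ for every $i$. To compute the tangent of $B_{i}$, I would move a point $q\in B_{i}$ towards $p$: such a $q$ satisfies $p\in T_{q}^{i}\WW$, and since also $q\in T_{q}^{i}\WW$ the secant line $\overline{pq}$ coincides with $T_{q}^{i}\WW$; letting $q\to p$ and using continuity of the web directions off $\Delta(\WW)$ gives $\overline{pq}\to T_{p}^{i}\WW$, so $B_{i}$ is tangent to $T_{p}^{i}\WW$. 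As the $k$ lines $T_{p}^{i}\WW$ are pairwise distinct for $p\notin\Delta(\WW)$, the branches $B_{1},\dots,B_{k}$ are smooth and pairwise transversal.

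The main obstacle I anticipate is the first step: justifying the Bertini statement on the possibly singular surface $S_{\WW}$ and checking that its non--smooth locus, together with the ramification of $\pi|_{S_{\WW}}$, is exactly what $\Delta(\WW)$ is meant to record. Once smoothness of the generic $\widetilde{P_{p}^{\WW}}$ off $\pi^{-1}(\Delta(\WW))$ is secured, the descent in parts $(1)$ and $(2)$ is essentially formal, the only genuinely geometric inputs being the one--line secant argument for injectivity and the continuity of the web directions used to identify the tangent lines at $p$.
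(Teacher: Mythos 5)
Your proposal is correct, and for part (1) it is essentially the paper's own argument: both apply Kleiman's extended first Bertini theorem to the system cut on $S_{\WW}$ by the divisors $\mathcal{H}_{p}$ and then descend along $\pi$, the only real difference being how one excludes two points of $\widetilde{P_{p}^{\WW}}$ over a point $x\neq p$ --- you use the incidence argument (two distinct lines cannot both contain $x$ and $p$), while the paper argues that near $x\notin\Delta(\WW)$ the web decomposes and $\LL_{p}$ is tangent to only one local foliation; your version is cleaner since it needs no hypothesis on $x$, whereas the paper's handles that case together with the discriminant. For part (2), however, you take a genuinely different route. The paper works downstairs in local coordinates at $p$: writing $\WW$ near $p$ as a product of regular $1$-forms $A_{i}dx+B_{i}dy$, the polar is $(xA_{1}+yB_{1})\cdots(xA_{k}+yB_{k})=0$, and the $k$ smooth transversal branches and their tangents are read off from the linear parts $xA_{j}(0)+yB_{j}(0)$. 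You instead stay on the incidence variety: Bertini gives smoothness of $\widetilde{P_{p}^{\WW}}$ at the $k$ points $(p,T_{p}^{i}\WW)$, the local biholomorphism pushes this down, and a limit-of-secants argument ($\overline{pq}=T_{q}^{i}\WW\to T_{p}^{i}\WW$) identifies the tangents. Both are valid, but note what each buys: the paper's computation shows that conclusion (2) holds for \emph{every} $p\notin\Delta(\WW)$ --- a refinement recorded in the Remark immediately after the theorem and relevant for the foliation results of Section 4 --- whereas your argument, by routing the smoothness at $(p,T_{p}^{i}\WW)$ through Bertini, only yields the statement for Bertini-generic $p$; that suffices for the theorem as stated, but loses the refinement. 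What your approach buys in exchange is coordinate-freeness and a single uniform mechanism (Bertini plus incidence geometry) for both parts.
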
 \begin{proof} Let us consider the linear system on
$S_{\WW}$ formed by the curves $\widetilde{P_{p}^{\WW}}=\check{\pi}|_{S_{\WW}}^{-1}(\check{p})$.
It is clear that this system has no fixed component, therefore we
can apply Bertini's first theorem to conclude that the generic element
is smooth outside of the base locus. Take now a generic $P_{p}^{\WW}=\pi(\widetilde{P_{p}^{\WW}})$
and $x\in sing(P_{\WW}^{p})$. We have the following possibilities: 
\begin{itemize}
\item x comes from a ramification point of $\pi|_{S_{\WW}}$ and so is a
point of $\Delta(\WW)$. 
\item x comes from a singularity of $\widetilde{P_{p}^{\WW}}$ and so comes
from a base point. 
\item There are two regular points $z_{1},z_{2}\in\widetilde{P_{p}^{\WW}}$,
out of the ramification of $\pi|_{S_{\WW}}$ which project to $x$.
If $x\neq p$, then $\WW$ decomposes in a neighborhood of $x$ and
$\LL_{p}$ is tangent in $x$ just to one of the foliations of this
decomposition. Then there is just one point in $\widetilde{P_{p}^{\WW}}$
over $x$, contradiction. Therefore $x=p$. 
\end{itemize}
This concludes the proof of the first assertion.\\
 In order to prove the second part, take local coordinates such that
$p=(0,0)$ and $\WW$ is given near to $p$ by the regular $1$-forms
$A_{1}dx+B_{1}dy,\ldots,A_{k}dx+B_{k}dy$. Then an equation for $P_{p}^{\WW}$
is $(xA_{1}+yB_{1})\ldots(xA_{k}+yB_{k})=0$. Since 
\[
xA_{j}+yB_{j}=xA_{j}(0)+yB_{j}(0)+h.o.t
\]
we conclude that $P_{p}^{\WW}$ has $k$ smooth transversal branches
through $p$. \end{proof}

\begin{remark} Observe that in the proof of (2) we only use $p\notin\Delta(\WW)$,
in particular, for a foliation $\FF$, the polar curve $P_{p}^{\FF}$
is smooth at $p$ when $p$ is a regular point of $\FF$. \end{remark}

\section{Irreducibility of the generic polar}

The goal of this section is to classify the webs whose generic polar
curve is decomposable. We begin with an easy observation.

\begin{remark} If one considers a decomposable web $\WW=\WW_{1}\boxtimes\ldots\boxtimes\WW_{r}$,
then $P_{p}^{\WW}$ has $P_{p}^{\WW_{1}},\ldots,P_{p}^{\WW_{r}}$
as components and therefore is decomposable. \end{remark}

\begin{theorem}\label{irreducibilidad} Let $\WW$ be an irreducible
$k$-web with $k\geq2$. Then $P_{p}^{\WW}$ is decomposable for generic
$p\in\PP$ if and only if deg $\WW$ $=0$. \end{theorem}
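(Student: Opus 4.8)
The plan is to transfer the whole question to the incidence surface $S_{\WW}\subset M$ and the projection $\check\pi|_{S_{\WW}}\colon S_{\WW}\to\Pd$. Recall that $\widetilde{P_{p}^{\WW}}=S_{\WW}\cap\mathcal{H}_{p}=(\check\pi|_{S_{\WW}})^{-1}(\LL_{p})$ is precisely the preimage of the line $\LL_{p}\subset\Pd$, and that $P_{p}^{\WW}=\pi(\widetilde{P_{p}^{\WW}})$. As $p$ varies, the lines $\LL_{p}$ sweep out all of $\Pd$, so $\{\widetilde{P_{p}^{\WW}}\}$ is, up to the canonical identification of $\PP$ with the dual of $\Pd$, the linear system of preimages of lines under $\check\pi|_{S_{\WW}}$. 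The first observation to record is that $\dim\check\pi(S_{\WW})=2$ if and only if $\deg\WW\geq1$: by the very definition of the degree, the number of tangencies of $\WW$ with a fixed generic line $\ell$ equals the cardinality of $(\check\pi|_{S_{\WW}})^{-1}(\ell)$, so $\deg\WW$ is the degree of $\check\pi|_{S_{\WW}}$ over $\Pd$ when this map is dominant, and it vanishes exactly when the generic point of $\Pd$ is missed, i.e.\ when $\dim\check\pi(S_{\WW})\leq1$.

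For the implication ``$\deg\WW=0\Rightarrow$ decomposable'' I would argue directly on the curve rather than on $S_{\WW}$. When $d=0$ we have $\deg P_{p}^{\WW}=d+k=k$, while the second part of Theorem~\ref{singularidades-polar} gives, for generic $p$, exactly $k$ smooth transversal branches of $P_{p}^{\WW}$ through $p$, hence $\mathrm{mult}_{p}(P_{p}^{\WW})=k$. A plane curve of degree $k$ having a point of multiplicity $k$ is necessarily a cone with that vertex, that is, a union of $k$ lines through $p$; since $k\geq2$, this exhibits $P_{p}^{\WW}$ as decomposable.

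For the converse ``$\deg\WW\geq1\Rightarrow$ irreducible'' I would invoke Bertini's irreducibility theorem. Using that $\WW$ is irreducible, the surface $S_{\WW}$ is irreducible, and by the first paragraph $\check\pi|_{S_{\WW}}\colon S_{\WW}\to\Pd$ is dominant onto the surface $\Pd$. In characteristic zero, with a $2$-dimensional image, Bertini's irreducibility theorem guarantees that the preimage of a generic line, namely the generic $\widetilde{P_{p}^{\WW}}$, is irreducible. Its image $P_{p}^{\WW}=\pi(\widetilde{P_{p}^{\WW}})$ is then irreducible as the continuous image of an irreducible variety, so the generic polar is not decomposable. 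Combining the two implications yields the stated equivalence.

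The step I expect to be the main obstacle is this converse. Two points need care. First, one must justify that irreducibility of the web genuinely forces $S_{\WW}$ to be irreducible, so that a single application of Bertini suffices rather than analysing components separately; this is where the hypothesis on $\WW$ truly enters. Second, one should confirm that the hypotheses of Bertini's irreducibility theorem hold: it is enough that the image be $2$-dimensional and that the ground field have characteristic zero, and no hidden splitting of generic line-preimages can occur because $\Pd$ is simply connected (so the only obstruction, passage through a nontrivial finite cover, is absent). The final passage from irreducibility of $\widetilde{P_{p}^{\WW}}$ to that of $P_{p}^{\WW}$ is harmless since $\pi$ is a morphism and $\pi|_{\mathcal{H}_{p}}$ is birational, so no collapsing of components occurs.
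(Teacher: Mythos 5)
Your proof is correct, and its backbone --- pass to the irreducible surface $S_{\WW}$ and apply a Bertini-type theorem to the preimages of lines under $\check{\pi}|_{S_{\WW}}$ --- is the same as the paper's. The paper quotes Kleiman's second extended Bertini theorem: the generic member $\widetilde{P_{p}^{\WW}}$ is reducible if and only if the linear system is composite with a pencil, i.e.\ if and only if the image of $\check{\pi}|_{S_{\WW}}$ is a curve, and then cites Pereira--Pirio (Prop.\ 1.4.2) for the equivalence of this with $\deg\WW=0$. Your Bertini irreducibility statement (irreducible source, two-dimensional image, characteristic zero $\Rightarrow$ irreducible generic line-preimage) is exactly the contrapositive of that combination, so the main implication is the same argument in different clothing; your direct derivation of the equivalence between $\deg\WW=0$ and non-dominance of $\check{\pi}|_{S_{\WW}}$ from the geometric definition of the degree replaces the citation. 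Where you genuinely add something is the direction the paper dismisses as ``clear'': your observation that for $d=0$ the polar has degree $k$ and multiplicity $k$ at $p$ (by Theorem \ref{singularidades-polar}(2)), hence is a cone, i.e.\ a union of $k\geq 2$ distinct lines through $p$, is a clean and complete elementary proof of that half. Two small caveats. First, like the paper, you assert rather than prove that irreducibility of $\WW$ forces $S_{\WW}$ to be irreducible; this is standard (the irreducible components of $S_{\WW}$ define subwebs whose superposition is $\WW$), but neither you nor the paper spell it out. Second, your parenthetical reason for the absence of splitting of generic line-preimages --- simple connectedness of $\Pd$ --- is not the actual mechanism; the correct one is Zariski's theorem that $\pi_{1}$ of the complement of the branch curve in a generic line surjects onto $\pi_{1}$ of its complement in $\Pd$, equivalently transitivity of the restricted monodromy, which is precisely what the Bertini theorem you invoke encodes. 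Since you cite that theorem itself and its hypotheses hold here, this imprecision does not affect the correctness of the proof.
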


\begin{proof} By hypothesis the surface $S_{\WW}$ is irreducible,
then we apply the second extended Bertini's theorem (cf. \cite[Theorem 5.3]{Kleiman})
to conclude that the generic curve $\widetilde{P_{p}^{\WW}}$ is reducible
if and only if the system is a composite with a pencil. So if we suppose
that the generic polar is decomposable then the image of $\check{\pi}|_{S_{\WW}}$
is a curve and this imply deg $\WW$ $=0$ (see \cite[Proposition 1.4.2]{PiPer}).
The converse part is clear. \end{proof}

\begin{corollary} For a $k$-web $\WW$ the generic polar is decomposable
if and only if $\WW$ decomposable or deg $\WW$ $=0$ and $k\geq2$.
\end{corollary}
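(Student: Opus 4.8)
The plan is to reduce everything to the irreducible case already settled in Theorem \ref{irreducibilidad}, using the unique decomposition of a web into irreducible factors together with the remark preceding that theorem. First I would write $\WW=\WW_{1}\boxtimes\cdots\boxtimes\WW_{r}$ as a product of irreducible webs. If $r\geq2$, then $\WW$ is decomposable and the remark preceding Theorem \ref{irreducibilidad} immediately gives that $P_{p}^{\WW}$ is decomposable; hence the left-hand side holds, and the right-hand side holds through its clause ``$\WW$ decomposable''. So the equivalence is verified in this case, and it remains to treat $r=1$, where $\WW$ is irreducible, the clause ``$\WW$ decomposable'' is false, and the right-hand side collapses to ``$\deg\WW=0$ and $k\geq2$''.

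For irreducible $\WW$ with $k\geq2$ there is nothing new to do: Theorem \ref{irreducibilidad} states exactly that $P_{p}^{\WW}$ is decomposable for generic $p$ if and only if $\deg\WW=0$, which is precisely the surviving right-hand side in this range. Thus both implications of the corollary coincide with the two directions of Theorem \ref{irreducibilidad}.

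The only case outside the scope of Theorem \ref{irreducibilidad} is $k=1$, that is, $\WW$ an irreducible foliation $\FF$; this is the main obstacle, since there the right-hand side is false ($\FF$ is not decomposable and $k\geq2$ fails), so I must independently show that the generic polar $P_{p}^{\FF}$ is never decomposable. I would dispatch this by invoking Mol's result \cite{Mol} that the generic polar of a foliation is irreducible. Alternatively, one can rerun the Bertini argument used for Theorem \ref{irreducibilidad}: the surface $S_{\FF}$ is irreducible because $\pi|_{S_{\FF}}$ is birational onto $\PP$, so the generic curve $\widetilde{P_{p}^{\FF}}$ can fail to be irreducible only if the linear system is composite with a pencil, which forces $\deg\FF=0$; but a degree-zero foliation is a radial foliation $\LL_{q}$, whose polar with center $p$ is the single line through $p$ and $q$, hence irreducible. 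Either way $P_{p}^{\FF}$ remains irreducible, so the left-hand side is false as well and the equivalence holds vacuously.

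Collecting the three cases $r\geq2$, $(r=1,\ k\geq2)$ and $(r=1,\ k=1)$ establishes the biconditional in full generality and completes the proof of the corollary.
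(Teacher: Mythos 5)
Your proof is correct and matches the paper's intended derivation: the paper states this corollary without proof, as an immediate combination of the preceding remark (a decomposable web has decomposable polars) and Theorem \ref{irreducibilidad}, which is exactly your first two cases. Your explicit treatment of the remaining case $k=1$ --- either via Mol's irreducibility theorem, which the paper cites in its introduction, or by rerunning the Bertini argument and noting that a degree-zero foliation is a pencil of lines whose generic polar is a single line --- is the one step the paper leaves implicit, and you handle it correctly.
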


An interesting fact about the polar curves is that they determine
the web as we will see.

\begin{lemma}\label{lemma-fundamental} For $i=1,2$ let $\WW_{i}$
be a $k_{i}$-web of degree $d_{i}$ and suppose $k_{1}\geq2$. If
$\RR(\WW_{1})=\RR(\WW_{2})$ then $P_{p}^{\WW_{1}}=P_{p}^{\WW_{2}}$
for every $p\in\PP$ and $(k_{1},d_{1})=(k_{2},d_{2})$. \end{lemma}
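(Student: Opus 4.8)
The plan is to prove that the equality of polar families $\RR(\WW_1)=\RR(\WW_2)$ forces the individual polar curves to coincide for every center, and then extract the numerical data $(k_1,d_1)=(k_2,d_2)$ from this. Let me think about both parts.

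The families $\RR(\WW_i)$ live in projective spaces of curves of degree $d_i+k_i$. First let me understand the structure. By Proposition (the one computing dim and degree), if $\WW_i$ is not a single radial foliation $\LL_p$, then $\RR(\WW_i)$ is a surface of degree $k_i^2$ in $C(d_i+k_i)$.

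The key structural observation: we have rational maps $\mathcal{G}_i: \PP \dashrightarrow C(d_i+k_i)$, $p \mapsto P_p^{\WW_i}$, with image $\RR(\WW_i)$. If $\RR(\WW_1)=\RR(\WW_2)$ as subvarieties, then in particular they sit in the same ambient space $C(r)$, so $d_1+k_1 = d_2+k_2 =: r$.

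For the numerical equality: I'd want to recover $k_i$ intrinsically from the family $\RR(\WW_i)$. Going back to the proof of the degree proposition, the key geometric fact is that for generic $p_1, p_2$, the condition $\{p_1,p_2\}\subseteq P_x^\WW$ is equivalent to $x \in T_{p_1}\WW \cap T_{p_2}\WW$, and this intersection has cardinality $k^2$. So the degree of $\RR(\WW_i)$ equals $k_i^2$. Since $\RR(\WW_1)=\RR(\WW_2)$ have the same degree, $k_1^2 = k_2^2$, hence $k_1=k_2$. Combined with $d_1+k_1 = d_2+k_2$, this gives $d_1=d_2$, so $(k_1,d_1)=(k_2,d_2)$.

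For the equality of polars: this is the main obstacle. We need that for every $p$, $P_p^{\WW_1}=P_p^{\WW_2}$, not just that the two families sweep out the same set of curves. The point is that equality of images does not a priori force the maps $\mathcal{G}_1, \mathcal{G}_2$ to agree pointwise — they could parametrize the same surface differently. So I need to show the parametrizations are compatible. The idea: recover the center $p$ from the curve $P_p^\WW$ intrinsically. By Theorem \ref{singularidades-polar}(2), for generic $p$ the curve $P_p^\WW$ has a distinguished singular point at $p$ where $k$ smooth transversal branches meet, and generically (for generic $p$) the other singularities lie in the fixed discriminant $\Delta(\WW)$. So the center $p$ is detected as the "mobile" singular point of multiplicity $k$. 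Thus from a generic member $C \in \RR(\WW_i)$ one recovers its center. If $\RR(\WW_1)=\RR(\WW_2)$, then for generic $C$ in the common surface, $C = P_{p}^{\WW_1} = P_{q}^{\WW_2}$; the center-recovery shows $p=q$ (the branch point of multiplicity $k$ is uniquely determined, and $k_1=k_2$). Hence $\mathcal{G}_1$ and $\mathcal{G}_2$ agree on a generic point, and since both are rational maps from $\PP$ agreeing generically, I conclude $P_p^{\WW_1}=P_p^{\WW_2}$ for generic $p$, then for all $p\in\PP$ by taking closures/continuity of the polar construction. The delicate part is making the center-recovery argument robust: one must ensure the multiplicity-$k$ singularity genuinely distinguishes the center from the discriminant singularities, using that $p \notin \Delta(\WW)$ generically and $k\geq 2$ (which is why the hypothesis $k_1\geq 2$ is needed — for $k=1$ there is no multiple point to detect the center).
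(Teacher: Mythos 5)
Your proposal is correct, and its core step --- recovering the center of a generic common polar as its unique singular point outside the discriminant --- is exactly the paper's argument: the paper takes a curve $P_{p}^{\WW_{1}}=P_{q}^{\WW_{2}}$ generic for both webs with $p\notin\Delta(\WW_{2})$; since this curve has $k_{1}\geq2$ smooth transversal branches through $p$ (Theorem \ref{singularidades-polar}(2) applied to $\WW_{1}$), it is singular at $p$, and Theorem \ref{singularidades-polar}(1) applied to $\WW_{2}$ forces $p=q$. Where you genuinely differ is the numerical bookkeeping: the paper reads $k_{1}=k_{2}$ directly off the branch count at the common center $p=q$ and then gets $d_{1}=d_{2}$ from equality of the degrees of the (now equal) polar curves, whereas you get $k_{1}=k_{2}$ from $\deg\RR(\WW_{i})=k_{i}^{2}$ and $d_{1}=d_{2}$ from equality of the ambient spaces $C(d_{i}+k_{i})$. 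Your route is valid but needs one observation you left implicit: the formula $\deg\RR(\WW_{2})=k_{2}^{2}$ is stated in the paper only for webs different from a pencil of lines $\LL_{q}$, so you must rule that case out; this is immediate, since $\RR(\LL_{q})$ is a line while $\dim\RR(\WW_{1})=2$ (here $k_{1}\geq2$ guarantees $\WW_{1}\neq\LL_{p}$), but it should be said. In exchange, your derivation of $k_{1}=k_{2}$ is logically independent of the center-recovery step, which slightly decouples the two conclusions of the lemma; the paper's version is more economical, as both the pointwise equality and $(k_{1},d_{1})=(k_{2},d_{2})$ fall out of the single genericity argument.
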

\begin{proof} We recall that $\RR(\WW_{i})$ is the image of the
rational map $\mathcal{G}_{i}:\PP\dashrightarrow C(d_{i}+k_{i})$.
Take now $P_{p}^{\WW_{1}}\in\RR(\WW_{1})=\RR(\WW_{2})$ generic for
$\WW_{1}$ and $\WW_{2}$ in the sense of Theorem \ref{singularidades-polar}
which is outside of $\mathcal{G}_{1}(\Delta(\WW_{2}))$. Then $P_{p}^{\WW_{1}}=P_{q}^{\WW_{2}}$
for some point $q$. As $P_{q}^{\WW_{2}}$ has $k_{1}$ branches through
$p\notin\Delta(\WW_{2})$, Theorem \ref{singularidades-polar} implies
that $p=q$ and $k_{1}=k_{2}$. To conclude we only need to point
that $k_{1}+d_{1}=k_{2}+d_{2}$. \end{proof}

\begin{theorem}\label{Familia-polar} If $\WW_{1}$ and $\WW_{2}$
are $k$-webs of degree $d$, with $k\geq2$, such that $\RR(\WW_{1})=\RR(\WW_{2})$,
then $\WW_{1}=\WW_{2}$. \end{theorem}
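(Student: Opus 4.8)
The plan is to read off the result from Lemma \ref{lemma-fundamental} together with the criterion for equality of polars (the second proposition of Section 2), reducing everything to a statement about symmetric $1$-forms. First I would invoke Lemma \ref{lemma-fundamental}: since $k\geq 2$ and $\RR(\WW_1)=\RR(\WW_2)$, it yields $P_p^{\WW_1}=P_p^{\WW_2}$ for \emph{every} $p\in\PP$ (the equality $(k_1,d_1)=(k_2,d_2)$ it also provides is consistent with our hypothesis). Writing $\WW_1=[\omega_1]$ and $\WW_2=[\omega_2]$ with $\omega_1,\omega_2\in H^{0}(\PP,\Sym^{k}\Omega^{1}_{\PP}\otimes\OO_{\PP}(d+2k))$, I set $\eta:=\omega_2-\omega_1$, which lives in the same space. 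For every $p$ outside the finite set $\mathcal{P}$ of centers we have $P_p^{\WW_i}\subsetneq\PP$, so the proposition on equality of polars applies and tells us that $[\eta]$ factors as $\LL_p\boxtimes\WW'$; in particular $\eta$ is divisible by the radial $1$-form defining $\LL_p$. The theorem thus reduces to showing that a nonzero $k$-symmetric $1$-form cannot be divisible by $\LL_p$ for all $p$ in a dense subset of $\PP$.

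To prove this I would argue by contradiction, assuming $\eta\not\equiv 0$, and fix a generic point $q$ with $\eta(q)\neq 0$. There $\eta$ determines at most $k$ tangent directions, namely the roots of $\eta|_{q}$ viewed as a degree-$k$ homogeneous polynomial in $dx,dy$. Divisibility of $\eta$ by the radial $1$-form of $\LL_p$ forces the tangent direction of $\LL_p$ at $q$, which is the direction of the line $\overline{pq}$, to be one of these $k$ directions. But as $p$ ranges over the dense set of admissible centers, the lines $\overline{pq}$ sweep out infinitely many directions through $q$, contradicting the finiteness of the directions cut out by $\eta|_{q}$. Hence $\eta(q)=0$ for every generic $q$, so $\eta\equiv 0$, i.e.\ $\omega_1=\omega_2$ and $\WW_1=\WW_2$.

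To make the key step fully rigorous I would realize it in an affine chart: writing $\eta=\sum_{i=0}^{k}a_i(x,y)\,dx^{k-i}dy^{i}$ and taking $\LL_p=(y-p_2)\,dx-(x-p_1)\,dy$, divisibility amounts to the identity $\sum_{i=0}^{k}a_i(x,y)(y-p_2)^{i}(x-p_1)^{k-i}\equiv 0$ for each $p=(p_1,p_2)$. Fixing $(x,y)$ and varying $p$, the functions $(x-p_1)^{k-i}(y-p_2)^{i}$ are linearly independent in $p$, which forces $a_i(x,y)=0$ for all $i$ and hence $\eta\equiv 0$. The only delicate points are interpreting the factorization supplied by the proposition as an honest divisibility of the symmetric form $\eta$ (legitimate precisely when $\eta\neq 0$, the case under consideration) and checking that the finitely many excluded centers in $\mathcal{P}$ are harmless, since the vanishing identities are polynomial in $p$ and already hold on a dense set. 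I expect this bookkeeping, rather than any substantial new idea, to be the main obstacle, the genuine content being the elementary fact that $k$ fixed directions cannot contain every radial direction through a point.
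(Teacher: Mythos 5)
Your first step (the appeal to Lemma \ref{lemma-fundamental}) is fine, but the second step contains a genuine gap. The second proposition of Section 2 cannot be applied to the \emph{fixed} section $\eta=\omega_2-\omega_1$: a web determines its defining symmetric form only up to a multiplicative scalar, and $P_p^{\WW_1}=P_p^{\WW_2}$ is an equality of \emph{curves}, so it only says that the two polar polynomials are proportional, $\sum_i a_i(x,y)(x-p_1)^{k-i}(y-p_2)^i=\lambda(p)\sum_i b_i(x,y)(x-p_1)^{k-i}(y-p_2)^i$, with a nonzero constant $\lambda(p)$ that a priori depends on $p$. This is precisely the role of the phrase ``one can assume'' in the paper's proof of that proposition: a representative is rescaled, for the given $p$. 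What you really obtain is that $\omega_1-\lambda(p)\,\omega_2$ is divisible by the radial form at $p$, and your linear-independence computation says nothing about such a $p$-dependent family. The delicate point is therefore not the one you flag (whether $\eta\neq 0$ defines a web); it is the normalization. The flaw is visible in the shape of your conclusion: you derive $\omega_1=\omega_2$ as sections, while the hypothesis $\RR(\WW_1)=\RR(\WW_2)$ is invariant under $\omega_2\mapsto c\,\omega_2$. Concretely, take $\omega_2=2\,\omega_1$, so that $\WW_1=\WW_2$ and all your hypotheses hold: then $\eta=\omega_1\neq 0$, and $\eta$ is divisible by the radial form at $p$ only for the finitely many $p$ such that $\LL_p$ is a subweb of $\WW_1$ --- not for every $p$ outside a finite set, as your argument requires.

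The gap can be closed, but it takes an actual argument, not bookkeeping: one must show $\lambda(p)$ may be taken constant. For instance, $\lambda$ is a rational function of $p$ (the polar polynomial is polynomial in $(x,p)$ and linear in the form); writing $\lambda=u/v$ in lowest terms and using unique factorization in $\mathbb{C}[x,y,p_1,p_2]$, the polar polynomials of $\omega_1$ and $\omega_2$ are $u(p)Q$ and $v(p)Q$ for a common $Q$; if $u$ were nonconstant, every $p$ on the curve $\{u=0\}$ would satisfy $P_p^{\WW_1}=\PP$, i.e.\ $\LL_p$ would be a subweb of $\WW_1$ by the first proposition of Section 2, which is possible for at most $k$ points. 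Hence $u$ and $v$ are constants, $\omega_2$ can be rescaled once and for all, and only then does your computation close the proof. Note that the paper avoids this issue entirely: from $P_p^{\WW_1}=P_p^{\WW_2}$ at a generic $p$, part (2) of Theorem \ref{singularidades-polar} identifies the tangents of the $k$ branches of that curve at $p$ with the directions of $T_p\WW_1$ and also with those of $T_p\WW_2$, so $T_p\WW_1=T_p\WW_2$ for generic $p$ and hence $\WW_1=\WW_2$; that route is scale-free and two lines long.
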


\begin{proof} Let $p\in\PP$ a generic point. By the previous lemma
$P_{p}^{\WW_{1}}=P_{p}^{\WW_{2}}$, so the branches by $p$ are the
same and by the second part of Theorem \ref{singularidades-polar}
we have $T_{p}\WW_{1}=T_{p}\WW_{2}$. \end{proof}

\begin{remark} The proof of Lemma \ref{lemma-fundamental} does not
hold for foliations. Actually, in \cite{Cam-Oliv} we can find different
foliations of degree one with the same polar family. However, in the
same work the authors showed that Theorem \ref{Familia-polar} remains
true for foliations of degree $d\geq2$. \end{remark} 

\section{Polar curves of foliations}

Now we focus in the case of foliations. Let $\FF$ be a foliation
on $\PP$ of degree $d \geq 1$, in this case the polar family $\RR(\FF)$ is a net. We recall from \cite{Per} the definition
of the inflexion divisor of $\FF$, denoted by $\mathcal{E}(\FF)$,
as the curve formed (out of the singularities) by the inflexion points
of the leaves of $\FF$. If the foliation is given locally by the
vector field $X=A\frac{\partial}{\partial x}+B\frac{\partial}{\partial y}$,
then 
\[
\mathcal{E}(\FF)=\left\{ B^{2}A_{y}+ABA_{x}-A^{2}B_{x}-ABB_{y}=0\right\} .
\]

\begin{lemma} For every point $p\in\PP$ we have $Sing(P_{p}^{\FF})\subseteq\mathcal{E}(\FF)$.
\end{lemma}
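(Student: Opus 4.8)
The plan is to reduce everything to a local computation in an affine chart and to place the center at the origin, $p=(0,0)$; this is harmless because $\mathcal{E}(\FF)$ is intrinsically attached to $\FF$. Indeed, along a leaf parametrization $\sigma$ with $\sigma'=X$ one has $\mathcal{E}(\FF)=\{\det(\sigma',\sigma'')=0\}$, and this condition is invariant under affine changes of coordinates (the determinant only picks up the nonzero factor $\det$ of the linear part). Writing $\FF$ locally by $X=A\partial_x+B\partial_y$, the polar $P_p^{\FF}$ is the tangency locus of $\FF$ with the radial foliation $\LL_p$, whose leaves are the lines through the origin, so it is cut out by $P:=xB-yA$. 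Before the main argument I would dispose of $\mathrm{Sing}(\FF)$: at any such point $A=B=0$, hence the defining function of $\mathcal{E}(\FF)$ vanishes automatically, giving $\mathrm{Sing}(\FF)\subseteq\mathcal{E}(\FF)$. It therefore suffices to treat a singular point $q=(x_0,y_0)$ of $P_p^{\FF}$ with $q\notin\mathrm{Sing}(\FF)$.

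The core step is a short elimination. At such a $q$ the three equations $P=P_x=P_y=0$ read
\[
x_0B-y_0A=0,\qquad B=y_0A_x-x_0B_x,\qquad A=x_0B_y-y_0A_y,
\]
all evaluated at $q$. The first says $(A,B)$ is parallel to $(x_0,y_0)$. Since $q\notin\mathrm{Sing}(\FF)$ the vector $(A,B)$ is nonzero; moreover $q=p$ would force $A=B=0$ through the last two equations, so in fact $q\neq p$ and we may write $A=\lambda x_0$, $B=\lambda y_0$ with $\lambda\neq0$. Substituting into the defining function of the inflexion divisor and factoring out $\lambda^2$, I expect to reach
\[
B^2A_y+ABA_x-A^2B_x-ABB_y=\lambda^2\bigl[x_0(y_0A_x-x_0B_x)+y_0(y_0A_y-x_0B_y)\bigr].
\]
The second and third equations give $y_0A_x-x_0B_x=\lambda y_0$ and $y_0A_y-x_0B_y=-\lambda x_0$, so the bracket collapses to $x_0(\lambda y_0)+y_0(-\lambda x_0)=0$. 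Hence $q\in\mathcal{E}(\FF)$, which is the claim.

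I would also record the conceptual reason behind the computation, since it dictates the right grouping above. The polar is the pullback $\gamma^{-1}(\check p)$ under the Gauss map $\gamma:\PP\dashrightarrow\Pd$ sending a regular point to the tangent line of the leaf through it, and $\mathcal{E}(\FF)$ is precisely the ramification divisor of $\gamma$. If $q\notin\mathcal{E}(\FF)$, then $\gamma$ is unramified at $q$ and, since the set $\check p$ of lines through $p$ is a smooth line in $\Pd$, the pullback $\gamma^{-1}(\check p)$ is smooth at $q$; thus a singular point of the polar forces $d\gamma_q$ to have rank at most one, i.e. $q\in\mathcal{E}(\FF)$. The main obstacle is simply the bookkeeping in the algebraic identity: one must choose the grouping that lets $P_x=P_y=0$ cancel against the parallelism relation, rather than expanding blindly. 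The only other point needing care is the separate, but immediate, handling of $\mathrm{Sing}(\FF)$ and of the center $p$ itself.
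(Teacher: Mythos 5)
Your proof is correct and follows essentially the same route as the paper: the paper's proof merely asserts that ``a straightforward computation'' shows a point of $P_p^{\FF}$ outside $\mathcal{E}(\FF)$ cannot be singular, and your elimination argument (using $P=P_x=P_y=0$ with $P=xB-yA$, disposing first of $\mathrm{Sing}(\FF)$ and of $q=p$, then writing $(A,B)=\lambda(x_0,y_0)$ to collapse the defining polynomial of $\mathcal{E}(\FF)$) is precisely that computation, carried out correctly. The Gauss-map interpretation you add is a nice conceptual gloss but is not needed and is not part of the paper's argument.
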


\begin{proof} It is a straightforward computation to show that a
point $q\in P_{p}^{\FF}$ which does not belong to $\mathcal{E}(\FF)$
cannot be a singularity of $P_{p}^{\FF}$.\end{proof}

We say that a singular point $q\in Sing(\FF)$ is \textbf{quasi-radial}
if after doing a blow up at $q$ the exceptional divisor is not invariant
by the strict transform of $\FF$. 
\begin{proposition}
For generic $p\in\PP$ the polar curve $P_{p}^{\FF}$ has the following
properties:
\begin{enumerate}
\item If $q\in Sing(\mathcal{F})$ is not quasi-radial, the tangent cone of
$P_{p}^{\FF}$ at $q$ does not contain the line $\overline{pq}$,
\item If $q\in Sing(\mathcal{F})$ is quasi-radial, the tangent cone of $P_{p}^{\FF}$at
$q$ contains the line $\overline{pq}$ exactly one time.
\end{enumerate}
\end{proposition}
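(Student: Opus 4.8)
The plan is to set up local coordinates at a singular point $q$ of $\FF$ and analyze the tangent cone of $P_p^{\FF}$ at $q$ directly, using the two local expressions already in play: the vector field $X = A\,\partial/\partial x + B\,\partial/\partial y$ defining $\FF$, and the equation
$$P_p^{\FF} = \{(x-a)A + (y-b)B = 0\}$$
for the polar with center $p=(a,b)$, which is the local form of the tangency equation between $\FF$ and the radial foliation $\LL_p$ (this is exactly the form used in the proof of Theorem \ref{singularidades-polar}, translated so that $q$ rather than $p$ is the origin). Placing $q$ at the origin, the line $\overline{pq}$ has direction $(a,b)$, so the whole statement reduces to understanding which linear forms divide the lowest-degree homogeneous part of $(x-a)A + (y-b)B$ at the origin.

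**First I would** determine the order of vanishing of $A$ and $B$ at $q$. Since $q$ is a singular point, $A$ and $B$ vanish at the origin; write $A = A^{(m)} + \cdots$ and $B = B^{(m)} + \cdots$ where $A^{(m)}, B^{(m)}$ are the lowest-degree homogeneous parts (of common order $m \geq 1$, generically $m=1$). Then
$$(x-a)A + (y-b)B = -aA^{(m)} - bB^{(m)} + (\text{order} \geq m+1),$$
so \emph{for generic $p$} the lowest-degree part of the polar at $q$ is the degree-$m$ form $Q(x,y) := -aA^{(m)} - bB^{(m)}$, which is exactly the tangent cone. The line $\overline{pq}$, with direction $(a,b)$, lies in this tangent cone precisely when the linear factor $bx - ay$ divides $Q$, i.e.\ when $Q(a,b)=0$. **The key computation** is therefore to evaluate $Q(a,b) = -aA^{(m)}(a,b) - bB^{(m)}(a,b)$ and to relate its vanishing to the blow-up behavior encoded in the quasi-radial condition.

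**The link to quasi-radiality** is the crux of the argument. After blowing up $q$, in the chart $y = tx$ the exceptional divisor is $\{x=0\}$, and the strict transform of $\FF$ leaves $E = \{x=0\}$ invariant exactly when the radial direction $xA^{(m)} + yB^{(m)}$ — the degree-$(m{+}1)$ form measuring tangency of $X$ with the radial vector field $x\,\partial/\partial x + y\,\partial/\partial y$ — vanishes identically; that is, $E$ is \emph{non}-invariant (so $q$ is quasi-radial) iff $xA^{(m)}(x,y) + yB^{(m)}(x,y) \not\equiv 0$. I would make this precise by recalling the standard blow-up formula for the strict transform: $E$ is invariant iff the "radial part" $xA^{(m)}+yB^{(m)}$ is the zero polynomial. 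Observe now that $Q(a,b) = -(aA^{(m)}(a,b) + bB^{(m)}(a,b))$ is, up to sign, exactly this radial form $xA^{(m)}+yB^{(m)}$ evaluated at $(x,y)=(a,b)$. Hence: if $q$ is \emph{not} quasi-radial, the radial form vanishes identically, so $Q(a,b)=0$, meaning $bx-ay \mid Q$ and the line $\overline{pq}$ \emph{does} lie in the tangent cone — which seems to contradict (1). This sign/interpretation subtlety is **the main obstacle**, and I expect the resolution is that when $E$ is invariant one must factor out the radial form and pass to the \emph{next} homogeneous part to read off the genuine tangent cone of the polar; the polar then inherits a tangent cone from that higher-order piece whose directions avoid $(a,b)$ for generic $p$, giving (1), whereas in the quasi-radial case $xA^{(m)}+yB^{(m)}\not\equiv 0$ has $(a,b)$ as a simple zero for generic $(a,b)$ (a generic point is a simple, not multiple, root of a fixed nonzero binary form), producing the factor $bx-ay$ \emph{exactly once} in the tangent cone and giving (2).

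**To finish**, I would treat the two cases separately. For (2), having established $m$ and that the radial form $R := xA^{(m)} + yB^{(m)}$ is a nonzero binary form of degree $m+1$, genericity of $p=(a,b)$ guarantees $R(a,b)=0$ is not forced and in fact $(a,b)$ is a generic point, so the relevant tangent-cone factor $bx-ay$ appears with multiplicity equal to the multiplicity of $(a,b)$ as a root, which is generically one; I would verify the multiplicity-one count by a derivative computation showing $\partial Q/\partial(\text{transverse direction})$ does not also vanish at $(a,b)$ for generic $(a,b)$. For (1), with $R \equiv 0$ I would compute the tangent cone from the leading surviving term of $(x-a)A+(y-b)B$ after the identical cancellation of the order-$m$ part, and check that its directions depend on $p$ in such a way that $(a,b)$ is excluded for $p$ outside a proper subvariety — again a genericity statement. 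Throughout, the genericity of $p$ is used exactly as in Theorem \ref{singularidades-polar}, namely to keep $p$ off a finite union of proper algebraic conditions ($\Delta(\FF)$, the discriminant loci of the various binary forms $R_q$ attached to each singular point $q$, and the lines joining pairs of singular points).
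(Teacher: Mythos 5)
Your proof follows the same skeleton as the paper's --- local coordinates at $q$, the lowest-degree homogeneous part of the local equation of $P_p^{\FF}$ as the tangent cone, containment of $\overline{pq}$ translated into vanishing at $(a,b)$ of a binary form attached to $q$, and quasi-radiality translated into identical vanishing of that form --- but both formulas you feed into this scheme are incorrect, and the errors do not cancel. First, with the vector-field convention you fix, $X=A\frac{\partial}{\partial x}+B\frac{\partial}{\partial y}$, the tangency equation with $\LL_p$ is the determinant $(x-a)B-(y-b)A=0$, not the dot product $(x-a)A+(y-b)B=0$; the expression $xA_j+yB_j$ in the proof of Theorem \ref{singularidades-polar} is correct there because in that proof the web is given by the $1$-forms $A_j\,dx+B_j\,dy$, not by vector fields. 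Second, and fatally, your blow-up criterion is reversed: the exceptional divisor fails to be invariant (i.e.\ $q$ is quasi-radial) precisely when the tangency form of $X$ with the radial vector field, namely the determinant $yA^{(m)}-xB^{(m)}$, vanishes \emph{identically} (then $A^{(m)}=xP$, $B^{(m)}=yP$), whereas you assert that identical vanishing of your form characterizes the \emph{invariant} case. The saddle $x\frac{\partial}{\partial x}-y\frac{\partial}{\partial y}$ already refutes your statement: $xA^{(1)}+yB^{(1)}=x^{2}-y^{2}\not\equiv 0$, yet the exceptional divisor of its blow-up is invariant, so by your criterion the saddle would be quasi-radial. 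Even under the charitable reading in which $A,B$ are the coefficients of a defining $1$-form $A\,dx+B\,dy$ (which would make your polar equation and your choice of form both correct), the reversed equivalence remains and sinks the argument.

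The reversal is exactly what produces the ``contradiction with (1)'' you run into, and the escape you propose does not exist. In your non-quasi-radial case you have $xA^{(m)}+yB^{(m)}\equiv 0$, hence $A^{(m)}=yS$ and $B^{(m)}=-xS$, and the lowest-degree part of your polar equation equals $(bx-ay)S$, which is \emph{not} the zero polynomial; so there is no ``next homogeneous part'' to pass to --- the tangent cone is already determined and, in your setup, contains $\overline{pq}$, irreparably contradicting (1). Symmetrically, your case (2) rests on the claim that a generic $(a,b)$ is a simple root of the fixed nonzero binary form $xA^{(m)}+yB^{(m)}$; a generic point is not a root of a fixed nonzero form at all (its zero set is a finite union of lines through the origin), so in your setup the line $\overline{pq}$ would generically \emph{not} lie in the tangent cone, contradicting (2) as well. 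With the two formulas corrected, the proof closes exactly as in the paper: the lowest-degree part of $(x-a)B-(y-b)A$ is $bA_{k}-aB_{k}$, so $\overline{pq}$ lies in the tangent cone iff $(yA_{k}-xB_{k})(a,b)=0$; if $q$ is not quasi-radial, $yA_{k}-xB_{k}$ is a nonzero form and generic $p$ avoids its finitely many zero lines, which gives (1); if $q$ is quasi-radial, then $A_{k}=xP$, $B_{k}=yP$, the lowest-degree part factors as $(bx-ay)P$, and the only genericity needed is that $bx-ay$ not be among the finitely many linear factors of $P$, which gives the multiplicity-one count in (2).
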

\begin{proof} Take local coordinates such that $q=(0,0)$ and $\FF$
is given locally by the vector field $X=A\frac{\partial}{\partial x}+B\frac{\partial}{\partial y}=(A_{k}+A_{k+1}+\ldots)\frac{\partial}{\partial x}+(B_{k}+B_{k+1}+\ldots)\frac{\partial}{\partial y}$,
where $A_{j}$ and $B_{j}$ are homogeneous polynomial of degree $j$.
Take now a generic point $p=(a,b)$, then an equation for $P_{p}^{\FF}$
is 
\[
(x-a)B-(y-b)A=bA_{k}-aB_{k}+h.o.t=0.
\]

Thus the tangent cone of $P_{p}^{\FF}$at $q$ contains the line $\overline{pq}$
if and only if $bA_{k}(a,b)-aB_{k}(a,b)=0$. If that holds for $p$
generic, it is equivalent to have $yA_{k}(x,y)-xB_{k}(x,y)\equiv0$,
which says that $q$ is a quasi-radial singularity of $\FF$. In this
case one writes $A_{k}=xP$ and $B_{k}=yP$ for some polynomial $P$
and then $bA_{k}-aB_{k}=(bx-ay)P$. Because $p$ is generic we can
assume that $bx-ay$ is not a factor of $P$. 

\end{proof}

The precedent proposition says that, for $p$ generic, a quasi-radial
singularity of $\FF$ gives exactly one intersection between $\check{P}_{p}^{\FF}$
and $\check{\mathcal{L}_{p}}$ (the duals of $P_{p}^{\FF}$ and $\mathcal{L}_{p}$ respectively) and a non quasi-radial singularity gives
no intersections between $\check{P}_{p}^{\FF}$ and $\check{\mathcal{L}_{p}}$.
On the other hand, the point $p$ give exactly one intersection between
$\check{P}_{p}^{\FF}$ and $\check{\mathcal{L}_{p}}$, as the following
lemma shows.

\begin{lemma}\label{inflexion-estatica} Let $p$ be a regular point
of $\FF$. Then $p$ is an inflexion point of $P_{p}^{\FF}$ if and
only if $p\in\EE(\FF)$. \end{lemma}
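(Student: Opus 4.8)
The plan is to reduce everything to a local computation at $p$, using the explicit affine equation of the polar that already appears in the Proposition of this section. Working in local coordinates with $\FF$ given by $X=A\frac{\partial}{\partial x}+B\frac{\partial}{\partial y}$ and center $p=(a,b)$, the polar curve $P_p^{\FF}$ is the zero locus of
\[
F(x,y)=(x-a)B(x,y)-(y-b)A(x,y).
\]
First I would record that $F(p)=0$ and that, by the Remark following Theorem~\ref{singularidades-polar}, $p$ is a \emph{smooth} point of $P_p^{\FF}$ because $p$ is regular for $\FF$ (indeed $F_x(p)=B(p)$ and $F_y(p)=-A(p)$, which do not both vanish). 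Being a smooth point, $p$ is an inflexion of $P_p^{\FF}$ if and only if the classical condition of contact of order $\geq 3$ with the tangent line holds, namely the vanishing at $p$ of
\[
\Phi:=F_{xx}F_y^{2}-2F_{xy}F_xF_y+F_{yy}F_x^{2},
\]
which is the numerator of the curvature of a level curve and is obtained by differentiating the implicit relation $F=0$ twice and setting the second derivative to zero (choosing the $y=y(x)$ or $x=x(y)$ branch according to which of $F_y(p),F_x(p)$ is nonzero).

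The next step is the differentiation. Since every term of $F$ carries a factor $(x-a)$ or $(y-b)$ apart from the leading pieces, evaluating at $p=(a,b)$ kills all higher terms and leaves, writing everything at $p$,
\[
F_x=B,\quad F_y=-A,\quad F_{xx}=2B_x,\quad F_{xy}=B_y-A_x,\quad F_{yy}=-2A_y.
\]
Substituting these into $\Phi$ and simplifying gives
\[
\Phi(p)=-2\bigl(B^{2}A_y+AB\,A_x-A^{2}B_x-AB\,B_y\bigr)(p).
\]
The quantity in parentheses is precisely the polynomial defining the inflexion divisor $\EE(\FF)$. Hence $\Phi(p)=0$ if and only if $p\in\EE(\FF)$, and since $\Phi(p)=0$ is exactly the inflexion condition for the smooth point $p$ of $P_p^{\FF}$, the lemma follows.

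The computation is entirely routine, so there is no serious obstacle; the only points that require care are (i) confirming that $p$ is a smooth point of the polar so that the inflexion criterion applies at all, which is exactly where regularity of $p$ for $\FF$ is used, and (ii) matching the constant $-2$ and the signs so that $\Phi(p)$ is genuinely a nonzero multiple of the inflexion-divisor polynomial rather than some unrelated combination. Once the five partial derivatives above are correctly evaluated at $p$, the identification with $\EE(\FF)$ is immediate.
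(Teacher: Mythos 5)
Your proof is correct and follows essentially the same route as the paper: both apply the second-order (Hessian/curvature) inflexion criterion to the implicit equation of the polar and identify the resulting condition with the defining polynomial of $\EE(\FF)$. The only difference is that the paper first normalizes coordinates so that the vector field is $\frac{\partial}{\partial x}+B\frac{\partial}{\partial y}$ with $B(p)=0$, reducing both sides of the equivalence to $B_x(p)=0$, whereas you work in general coordinates and verify the identity $\Phi(p)=-2\bigl(B^{2}A_y+AB\,A_x-A^{2}B_x-AB\,B_y\bigr)(p)$ directly.
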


\begin{proof} Take local coordinates such that $p=(0,0)$ and $\FF$
is given by the vector field $\frac{\partial}{\partial x}+B\frac{\partial}{\partial y}$
with $B(0,0)=0$. Therefore $P_{p}^{\FF}=\{y-xB=0\}$ has inflexion
at $p$ if and only if the Hessian of $y-xB$ vanishes on the tangent
direction of $P_{p}^{\FF}$ at $p$. By a straightforward computation
this is equivalent to have $\frac{\partial B}{\partial x}(0,0)=0$.
On the other hand, the static curve $\EE(\FF)$ pass through $p$
if and only if $\frac{\partial B}{\partial x}(0,0)=0$. \end{proof}

Putting all this together as in \cite[Proposition 4.1]{Mol} we obtain
a bound for the number of quasi-radial singularities:

\[
\#Sing_{QR}(\FF)\leq deg(\check{P}_{p}^{\FF})-1.
\]

We recall that R. Mol obtains this inequality for quasi-radial singularities
of multiplicity one.

Now we state the main result of this section, which establish the
invariance of the topological type of the generic polar.

\begin{theorem}\label{polar-topologia-constante} For a foliation
$\mathcal{F}$, the generic polar has constant topological embedded
type in $\PP$, that is: if $P_{p_{1}}^{\FF}$ and $P_{p_{2}}^{\FF}$
are generic polar curves, then there exists a homeomorphism $H:\PP\rightarrow\PP$such
that $H(P_{p_{1}}^{\FF})=P_{p_{2}}^{\FF}$ . In particular, the genus
of the generic polar is constant.

\end{theorem}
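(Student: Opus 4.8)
The plan is to construct a continuous family of homeomorphisms of $\PP$ connecting any two generic polar curves, using the fact that the generic polars form a net $\RR(\FF)$ parametrized by $\PP$ itself via the map $\mathcal{G}$. Concretely, I would show that over a Zariski open (hence connected, path-connected) set $U\subseteq\PP$ of ``generic'' centers, the total space $\{(p,z)\in U\times\PP: z\in P_{p}^{\FF}\}$ is a topologically locally trivial fibration over $U$, with each fiber $P_{p}^{\FF}$ sitting inside the ambient $\PP$. Local triviality of the embedded family, together with path-connectedness of $U$, then yields for any $p_{1},p_{2}\in U$ an ambient homeomorphism $H:\PP\to\PP$ carrying $P_{p_{1}}^{\FF}$ to $P_{p_{2}}^{\FF}$, which is exactly the assertion of constant topological embedded type; the constancy of the genus follows immediately.

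The heart of the matter is to establish the equisingularity of the family, i.e. that the singularities of $P_{p}^{\FF}$ do not degenerate or collide as $p$ varies over $U$. Here I would exploit the structural results already proved. By Lemma 4.1 (the unnamed lemma preceding the quasi-radial discussion) the singularities of every polar are confined to the fixed inflexion divisor $\EE(\FF)$, which is independent of $p$. More precisely, for generic $p$ the singular locus of $P_{p}^{\FF}$ consists of: the center $p$ itself, where Theorem \ref{singularidades-polar}(2) (and the subsequent remark) guarantees a smooth point when $p$ is regular for $\FF$, so $p$ contributes no singularity; and the singular points $q\in Sing(\FF)$, where Proposition 4.1 describes the local tangent cone of $P_{p}^{\FF}$ via whether $q$ is quasi-radial. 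The key observation is that the local analytic type of $P_{p}^{\FF}$ at each such $q$ is governed by the leading homogeneous parts $A_{k},B_{k}$ of the defining vector field of $\FF$ at $q$, which are fixed data of $\FF$; the center $p$ enters only through the generic linear form $bx-ay$, and genericity of $p$ ensures this form remains transverse to the fixed local geometry. Thus each local singularity of $P_{p}^{\FF}$ is analytically (indeed holomorphically) trivial along $U$, the local equisingularity type being constant by the local result of Lê--Ramanujam (cited as \cite{LR}) applied to the one-parameter deformation obtained by moving $p$.

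The main obstacle I anticipate is promoting the local triviality at each singular point, together with smoothness away from the finitely many fixed points of $\EE(\FF)\cap Sing(\FF)$ and the moving smooth center $p$, into a single \emph{ambient} isotopy of $\PP$. Away from the discriminant the polars are smooth and move in a locally trivial way by Ehresmann's theorem applied to the submersion defining the incidence variety $S_{\FF}$; near each fixed singularity one uses the local topological triviality supplied by \cite{LR}; the delicate step is the smooth point $p$, which moves with the parameter, so one must arrange the gluing of the local trivializations to accommodate the moving marked point. I would handle this by first performing an ambient isotopy of $\PP$ moving $p_{1}$ to $p_{2}$ along a path in $U$, pulling back the family so that the center becomes stationary, and then invoking a Thom--Mather type first isotopy lemma for the resulting family with fixed strata to obtain the global ambient homeomorphism. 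Verifying that the stratification by $\{p\}$, $\EE(\FF)\cap Sing(\FF)$, and the smooth part satisfies the Whitney conditions uniformly in $p$ over $U$ is where the real work lies, but the explicit local normal forms from Proposition 4.1 and Lemma \ref{inflexion-estatica} make the smooth-point and singular-point contributions transparent enough to control.
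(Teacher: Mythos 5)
Your global strategy --- stratify the total space $\{(p,z):z\in P_{p}^{\FF}\}$ over a Zariski open set of centers and invoke Ehresmann plus the Thom--Mather first isotopy lemma --- is genuinely different from the paper's. The paper instead performs a \emph{simultaneous resolution}: it cites \cite{Casas} (chapter 7) for the fact that the generic members of the local linear system cut out by the polar net at each $q\in Sing(\FF)$ are reduced and equisingular, so one fixed sequence of blow-ups desingularizes all generic $P_{p}^{\FF}$ at once; the strict transforms then form a family of \emph{smooth} curves in a fixed blown-up surface, and an elementary tubular-neighborhood/graph argument produces a fiber-preserving homeomorphism, equal to the identity near the boundary and preserving the exceptional divisor, which descends to $\PP$. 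That route avoids stratification theory entirely. Your route could be made to work, but as written it has a genuine gap exactly at the step that carries all the content.

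The gap is your justification of local equisingularity at the fixed singularities $q\in Sing(\FF)$. The local equation of the polar at $q$ is $(x-a)B-(y-b)A$, and its germ at $q$ depends on the \emph{full} Taylor expansions of $A$ and $B$; the leading homogeneous parts $A_{k},B_{k}$ control only the tangent cone (which is all the quasi-radial proposition gives you), and a tangent cone does not determine the topological --- let alone analytic --- type of a plane curve germ. In particular the family $p\mapsto (P_{p}^{\FF},q)$ is certainly not ``holomorphically trivial'' in general: generic members of a linear system of germs are equisingular but their analytic moduli vary. Nor can \cite{LR} be invoked as a substitute in the way you state: its \emph{hypothesis} is constancy of the Milnor number along the deformation, which you never establish. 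What is needed (and fixable by standard arguments) is: upper semicontinuity of $\mu(P_{p}^{\FF},q)$ in $p$ gives $\mu$-constancy on a Zariski open set of centers; then $\mu$-constancy for families of reduced plane curve germs yields topological triviality and, crucially for your Thom--Mather step, Whitney equisingularity (Zariski/Teissier), or one simply cites \cite{Casas} as the paper does. Without this, the ``Whitney conditions uniformly in $p$'' that you defer to the end are precisely the unproven assertion, so the proposal does not close the argument. (A minor remark in the other direction: your worry about the moving marked point $p$ is unnecessary --- for generic $p$ the center is a \emph{smooth} point of $P_{p}^{\FF}$, the incidence variety is smooth near the diagonal and the projection is submersive there, so no extra stratum or preliminary isotopy is required.)
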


\begin{proof} It is sufficient to prove that the generic polar has
locally constant topological embedded type in $\PP$. The polar net
defines a local linear system on each singularity $q_{1},\ldots,q_{l}$
of $\FF$. For generic $p\in\PP$, the family $(P_{p}^{\FF},q_{j})$
is equisingular $(j=1,\ldots,l)$ and each curve is reduced (see \cite{Casas}
chapter 7). Moreover, for each $j=1,\ldots,l$, there exists a finite
sequence of blow-ups $\pi_{j}$ at $q_{j}$with the following properties: 
\begin{enumerate}
\item The sequence of blow-ups $\pi_{j}$ desingularizates the generic curve
$(P_{p}^{\FF},q_{j})$. 
\item There is a list $D_{1}^{j},\ldots,D_{N_{j}}^{j}$of components (not
necessarily different) of the exceptional divisor of $\pi_{j}$ such
that the strict transform of the generic curve $(P_{p}^{\FF},q_{j})$
has exactly $N_{j}$ branches $\gamma_{1}(p),\ldots,\gamma_{N_{j}}(p)$
passing (transversely) through $D_{1}^{j},\ldots,D_{N_{j}}^{j}$ respectively. 
\end{enumerate}
Let $\zeta_{k}^{j}(p)$ be the intersection between $D_{k}^{j}$ and
$\gamma_{k}^{j}(p)$. Denote by $M$ the surface resulting by the
composition $\pi$ of all the blow-ups involved in $\pi_{1},\ldots,\pi_{l}$,
by $E$ the total exceptional divisor and by $\overline{P_{p}}\subseteq M$
the strict transform of $P_{p}^{\FF}$. Observe that $\overline{P_{p}}$
is smooth and intersects the exceptional divisor at the points $\{\zeta_{k}^{j}(p):j=1,\ldots,l;\hspace{1em}k=1,\ldots,N_{j}\}$.
\begin{lemma}\label{funciones definidoras} For any $z\in M$, there
is a neighborhood $U$ of $z$ and an analytic family $\{f_{p}^{U}\}$of
functions on $U$ such that $\overline{P_{p}}\cap U=\{f_{p}^{U}=0\}$
for generic $p$. \end{lemma} \begin{proof} The proposition is obvious
if $z\notin E$. Let us take $z\in E$ and suppose first that $z$
is not a corner point. In a neighborhood of $\pi(z)\in\PP$ the polar
family is defined by an analytic family of equations $\{h_{p}\}$.
Let $g=0$ be a local reduced equation of $E$ at $z$. Then, since
the resolution of the family $P_{p}^{\FF}$ ($p$ generic) is achieved
with the same sequence of blow-ups, there exists $n\in\mathbb{N}$
such that, in a neighborhood of $z$, the curve $\overline{P_{p}}$
is defined by the equation $f_{p}=\frac{h_{p}\circ\pi}{g^{n}}=0$
for generic $p$. In the case of a corner, the proof is similar. \end{proof}
Fix now a generic $p_{0}\in\PP$ and consider the curve $\overline{P}=\overline{P_{p_{0}}}$.
According with the tubular neighborhood theorem, the curve $\overline{P}$
admits a neighborhood $T$ fibered by smooth two dimensional disks
transverse to $\overline{P}$. This fibration can be taken such that
the intersection of $E$ with $T$ is invariant by the fibration,
i.e. $E\cap T$ is a finite union of disks.

\begin{center}
\includegraphics[width=6cm]{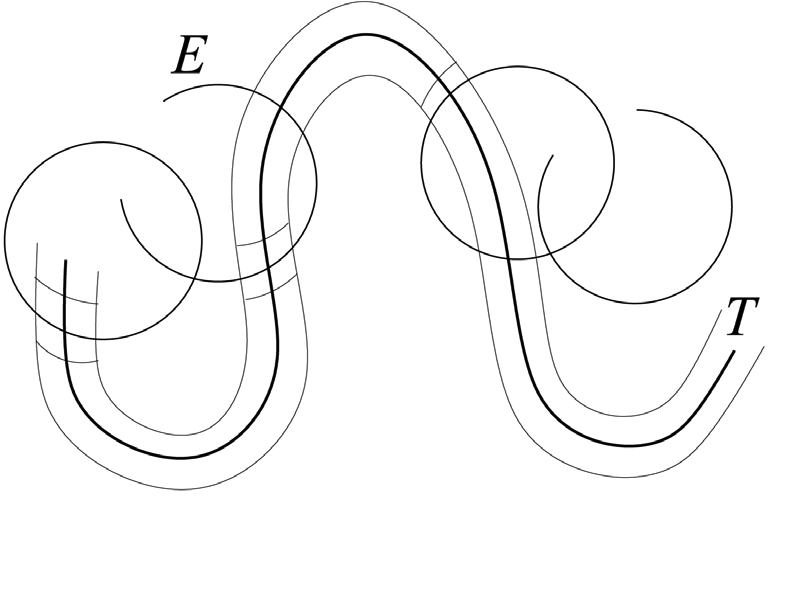}
\par\end{center}

From Lemma \ref{funciones definidoras} it is easy to prove that for
$p$ close enough to $p_{0}$, the curve $\overline{P_{p}}$ is contained
in $T$. We will use the following elementary lemma. \begin{lemma}\label{submersion-grafico}
Let $f:\overline{\mathbb{D}}\times\overline{\mathbb{D}}\rightarrow\mathbb{C}$
be a $C^{\infty}$submersion such that $f^{-1}(0)=\overline{\mathbb{D}}\times\left\{ 0\right\} $.
If $h:\overline{\mathbb{D}}\times\overline{\mathbb{D}}\rightarrow\mathbb{C}$
is close enough to $f$ in the $C^{1}$ topology, then $h^{-1}(0)$
is the graph of a function $\psi:\overline{\mathbb{D}}\rightarrow\mathbb{D}$.
\end{lemma}

Let $f_{p_{0}}^{U_{1}},\ldots,f_{p_{0}}^{U_{r}}$ be a finite set
of defining functions of $\overline{P}$ given by Lemma \ref{funciones definidoras},
$\overline{P}\subset U_{1}\cup\ldots\cup U_{r}$. Since the generic
polar is reduced and $\overline{P}$ is smooth, the functions $f_{p_{0}}^{U_{1}},\ldots,f_{p_{0}}^{U_{r}}$
are holomorphic submersions at points of $\overline{P}$. Thus, by
reducing $T$ and the open sets $U_{1},\ldots U_{r}$ if necessary
we may assume that 
\begin{enumerate}
\item The functions $f_{p_{0}}^{U_{1}},\ldots,f_{p_{0}}^{U_{r}}$ are submersions 
\item Any $z\in\overline{P}$ has a neighborhood $\Omega$ in $\overline{P}$ with the following property: If $D_\zeta$ denote the disc of the fibration on $T$ passing through $\zeta\in\overline{P}$, the closure of the set $\displaystyle{\bigcup_{\zeta\in\Omega}D_\zeta}$  is contained in some $U_{i}$. 
\end{enumerate}
Given $z\in\overline{P}$, take a neighborhood $\Omega$ of $z$ in
$\overline{P}$ as above an such that $\overline{\Omega}$ is diffeomorphic
to $\overline{\mathbb{D}}$. Let $T_{\Omega}$ be the restriction
to $\Omega$ of the fibration on $T$. Clearly $\overline{T_{\Omega}}$
admits $C^{\infty}$coordinates $(x,y)\in\overline{\mathbb{D}}\times\overline{\mathbb{D}}$
such that $\overline{P}$ is given by $\overline{\mathbb{D}}\times\{0\}$
and fibers are given by the sets $\{x\}\times\overline{\mathbb{D}}$.
Take some $f_{p_{0}}^{U_{i}}$ such that $\overline{T_{\Omega}}$
is contained in $U_{i}$. Then $f_{p_{0}}^{U_{i}}$ is a submersion
on $\overline{T_{\Omega}}$ and by Lemma \ref{submersion-grafico}
we see that for $p$ generic and close to $p_{0}$ the curve $\overline{P_{p}}$
intersects once each fiber of $T_{\Omega}$. By compacity we conclude
that for $p$ generic and close to $p_{0}$ the curve $\overline{P_{p}}$
is contained in $T$ and intersects once each fiber of $T$. Then
it is easy to construct a homeomorphism $H:\overline{T}\rightarrow\overline{T}$
with the following properties: 
\begin{enumerate}
\item $H$ fix the fibers 
\item $H$ maps $\overline{P_{p}}$ onto $\overline{P}$ 
\item $H$ is the identity when restricted to $\partial T$. 
\end{enumerate}
Clearly $H$ extends as a homeomorphism $H:M\rightarrow M$ by making
$H(z)=z$ for $z\notin\overline{T}$ and it is easy to see that $H(E)=E$.
Then $H$ induces a homeomorphism $h:\mathbb{P}^{2}\rightarrow\mathbb{P}^{2}$
such that $h(P_{p}^{\mathcal{F}})=P_{p_{0}}^{\mathcal{F}}$.

\end{proof}

\end{document}